\def\R{\textrm{I\kern-0.21emR}}
\def\N{\textrm{I\kern-0.21emN}}
\renewcommand{\geq}{\geqslant}
\renewcommand{\leq}{\leqslant}
\newtheorem{theorem}{Theorem}  
\newtheorem{proposition}{Proposition}
\newtheorem{lemma}{Lemma}
\theoremstyle{definition}\newtheorem{remark}{Remark}
\title{Stabilization of semilinear PDE's, and uniform decay under discretization}
\author{
Emmanuel Tr\'elat\footnote{Sorbonne Universit\'es, UPMC Univ Paris 06, CNRS UMR 7598, Laboratoire Jacques-Louis Lions, Institut Universitaire de France, F-75005, Paris, France (\texttt{emmanuel.trelat@upmc.fr}).}
}
\date{}
\begin{document}

\maketitle

\begin{abstract}
These notes are issued from a short course given by the author in a summer school in Chamb\'ery in June 2015.

We consider general semilinear PDE's and we address the following two questions:\\
1) How to design an efficient feedback control locally stabilizing the equation asymptotically to $0$?\\
2) How to construct such a stabilizing feedback from approximation schemes?

To address these issues, we distinguish between parabolic and hyperbolic semilinear PDE's. 
By parabolic, we mean that the linear operator underlying the system generates an analytic semi-group. By hyperbolic, we mean that this operator is skew-adjoint.

We first recall general results allowing one to consider the nonlinear term as a perturbation that can be absorbed when one is able to construct a Lyapunov function for the linear part. We recall in particular some known results borrowed from the Riccati theory.

However, since the numerical implementation of Riccati operators is computationally demanding, we focus then on the question of being able to design ``simple" feedbacks. For parabolic equations, we describe a method consisting of designing a stabilizing feedback, based on a small finite-dimensional (spectral) approximation of the whole system.
For hyperbolic equations, we focus on simple linear or nonlinear feedbacks and we investigate the question of obtaining sharp decay results.

When considering discretization schemes, the decay obtained in the continuous model cannot in general be preserved for the discrete model, and we address the question of adding appropriate viscosity terms in the numerical scheme, in order to recover a uniform decay. We consider space, time and then full discretizations and we report in particular on the most recent results obtained in the literature.

Finally, we describe several open problems and issues.
\end{abstract}

\newpage
\tableofcontents
\newpage

\section{Introduction and general results}
\subsection{General setting}
Let $X$ and $U$ be Hilbert spaces.
We consider the semilinear control system
\begin{equation}\label{contsys}
\dot y(t) = Ay(t) + F(y(t)) + Bu(t) ,
\end{equation}
where $A:D(A)\rightarrow X$ is an operator on $X$, generating a $C_0$ semi-group, $B\in L(U,D(A^*)')$, and $F:X\rightarrow X$ is a (nonlinear) mapping of class $C^1$, assumed to be Lipschitz on the bounded sets of $X$, with $F(0)=0$. We refer to \cite{CazenaveHaraux,EngelNagel,Trelat_SB,TucsnakWeiss} for well-posedness of such systems (existence, uniqueness, appropriate functional spaces, etc).

We focus on the following two questions:
\begin{enumerate}
\item How to design an efficient feedback control $u=Ky$, with $K\in L(X,U)$, locally stabilizing \eqref{contsys} asymptotically to $0$?
\item How to construct such a stabilizing feedback from approximation schemes?
\end{enumerate}
Moreover, we want the feedback to be as simple as possible, in order to promote a simple implementation. Given the fact that the decay obtained in the continuous setting may not be preserved under discretization, we are also interested in the way one should design a numerical scheme, in order to get a uniform decay for the solutions of the approximate system, i.e., in order to guarantee uniform properties with respect to the discretization parameters $\triangle x$ and/or $\triangle t$.
This is the objective of these notes, to address those issues.

\medskip

Concerning the first point, note that, in general, stabilization cannot be global because there may exist other steady-states than $0$, i.e., $\bar y\in X$ such that $A\bar y+F(\bar y)=0$.
This is why we speak of local stabilization.
Without loss of generality we focus on the steady-state $0$ (otherwise, just design a feedback of the kind $u=K(y-\bar y)$).

\medskip

We are first going to recall well known results on how to obtain local stabilization results, first in finite dimension, and then in infinite dimension.
As a preliminary remark, we note that, replacing if necessary $A$ with $A+dF(0)$, we can always assume, without loss of generality, that $dF(0)=0$, and thus,
\begin{equation*}
\Vert F(y)\Vert=\mathrm{o}(\Vert y\Vert)\quad \textrm{near}\ y=0.
\end{equation*}
Then, a first possibility to stabilize \eqref{contsys} locally at $0$ is to consider the nonlinear term $F(y)$ as a perturbation, that we are going to absorb with a linear feedback. Let us now recall standard results and methods.

\subsection{In finite dimension}
In this subsection, we assume that $X=\R^n$. In that case, $A$ is a square matrix of size $n$, and $B$ is a matrix of size $n\times m$.
Then, as it is well known, stabilization is doable under the Kalman condition
$$
\mathrm{rank}(B,AB,\ldots,A^{n-1}B)=n,
$$
and there are several ways to do it (see, e.g., \cite{Trelat_contopt} for a reference on what follows).

\paragraph{First possible way: by pole-shifting.}
According to the \emph{pole-shifting theorem}, there exists $K$ (matrix of size $m\times n$) such that $A+BK$ is Hurwitz, that is, all its eigenvalues have negative real part.
Besides, according to the \emph{Lyapunov lemma}, there exists a positive definite symmetric matrix $P$ of size $n$ such that
$$
P(A+BK) + (A+BK)^\top P = -I_n.
$$
It easily follows that the function $V$ defined on $\R^n$ by
$$
V(y) = y^\top P y
$$
is a Lyapunov function for the closed-loop system $\dot y(t)=(A+BK)y(t)$.

Now, for the semilinear system \eqref{contsys} in closed-loop with $u(t)=Ky(t)$, we have
$$
\frac{d}{dt} V(y(t)) = -\Vert y(t)\Vert^2 + y(t)^\top P F(y(t)) \leq -C_1 \Vert y(t)\Vert^2 \leq -C_2 V(y(t)),
$$
under an a priori assumption on $y(t)$, for some positive constants $C_1$ and $C_2$, whence the local asymptotic stability.

\paragraph{Second possible way: by Riccati procedure.}
The Riccati procedure consists of computing the unique negative definite solution of the \emph{algebraic Riccati equation}
$$
A^\top E+EA+EBB^\top E=I_n .
$$
Then, we set $u(t)=B^\top Ey(t)$.

Note that this is the control minimizing $\int_0^{+\infty}( \Vert y(t)\Vert^2+\Vert u(t)\Vert^2) dt$ for the control system $\dot y(t)=Ay(t)+Bu(t)$. This is one of the best well known results of the Linear Quadratic theory in optimal control.

Then the function 
$$
V(y) = -y^* E y
$$
is a Lyapunov function, as before.

Now, for the semilinear system \eqref{contsys} in closed-loop with $u(t)=B^\top Ey(t)$, we have
$$
\frac{d}{dt} V(y(t)) = -y(t)^\top ( I_n + EBB^\top E)y(t) - y(t)^\top E F(y(t)) \leq -C_1 \Vert y(t)\Vert^2 \leq -C_2 V(y(t)) ,
$$
under an a priori assumption on $y(t)$, and we easily infer the local asymptotic stability property.

\subsection{In infinite dimension}
\subsubsection{Several reminders}
When the Hilbert space $X$ is infinite-dimensional, several difficulties occur with respect to the finite-dimensional setting.
To explain them, we consider the uncontrolled linear system
\begin{equation}\label{sysA}
\dot y(t)=Ay(t) ,
\end{equation}
with $A:D(A)\rightarrow X$ generating a $C_0$ semi-group $S(t)$.

The first difficulty is that none of the following three properties are equivalent:
\begin{itemize}
\item $S(t)$ is exponentially stable, i.e., there exist $C>0$ and $\delta>0$ such that
$\Vert S(t)\Vert \leq C e^{-\delta t}$, for every $t\geq 0$.
\item the spectral abscissa is negative, i.e., $\sup\{\mathrm{Re}(\lambda)\ \mid\ \lambda\in\sigma(A)\}  < 0$.
\item All solutions of \eqref{sysA} converge to $0$ in $X$, i.e., $S(t)y^0\underset{t\rightarrow +\infty}\longrightarrow 0$, for every $y^0\in X$.
\end{itemize}

For example, if we consider the linear wave equation with local damping
$$ y_{tt} - \triangle y + \chi_\omega y = 0 ,$$
in some domain $\Omega$ of $\R^n$, with Dirichlet boundary conditions, and with $\omega$ an open subset of $\Omega$, then it is always true that any solution $(y,y_t)$ converges to $(0,0)$ in $H^1_0(\Omega)\times L^2(\Omega)$ (see \cite{Dafermos}). Besides, it is known that we have exponential stability if and only if $\omega$ satisfies the Geometric Control Condition (GCC). This condition says, roughly, that any generalized ray of geometric optics must meet $\omega$ in finite time. Hence, if for instance $\Omega$ is a square, and $\omega$ is a small ball in $\Omega$, then GCC does not hold and hence the exponential stability fails, whereas convergence of solutions to the equilibrium is valid.

In general, we always have
$$
\sup\{\mathrm{Re}(\lambda)\ \mid\ \lambda\in\sigma(A)\}
\leq
\inf \{ \mu\in\R \ \mid \exists C>0,\ \Vert S(t)\Vert\leq Ce^{\mu t}\quad \forall t\geq 0\} ,
$$
in other words the spectral abscissa is always less than or equal to the best exponential decay rate. The inequality may be strict, and the equality is referred to as ``spectral growth condition".

\medskip

Let us go ahead by recalling the following known results:
\begin{itemize}
\item \emph{Datko theorem}: $S(t)$ is exponentially stable if and only if,  for every $y^0\in X$, $S(t)y^0$ converges exponentially to $0$.
\item \emph{Arendt-Batty theorem}:
If there exists $M>0$ such that $\Vert S(t)\Vert \leq M$ for every $t\geq 0$, and if $i\R\subset\rho(A)$ (where $\rho(A)$ is the resolvent set of $A$), then $S(t)y^0\underset{t\rightarrow+\infty}\longrightarrow 0$ for every $y^0\in X$.
\item \emph{Huang-Pr\"uss theorem}:
Assume that there exists $M>0$ such that $\Vert S(t)\Vert \leq M$ for every $t\geq 0$. Then $S(t)$ is exponentially stable if and only if $i\R\subset\rho(A)$ and $\sup_{\beta\in\R}\Vert(i\beta\mathrm{id}-A)^{-1}\Vert<+\infty$.
\end{itemize}
Finally, we recall that:
\begin{itemize}
\item Exactly null controllable implies exponentially stabilizable, meaning that there exists $K\in L(X,U)$ such that $A+BK$ generates an exponentially stable $C_0$ semi-group.
\item Approximately controllable does not imply exponentially stabilizable.
\end{itemize}

For all reminders done here, we refer to \cite{CurtainZwart, EngelNagel, Liu, LiuZheng, Zabczyk}.

\subsubsection{Stabilization}
Let us now consider the linear control system
$$\dot y=Ay+Bu$$
and let us first assume that $B\in L(U,X)$, that is, the control operator $B$ is bounded.
We assume that the pair $(A,B)$ is exponentially stabilizable.

\paragraph{Riccati procedure.}
As before, the Riccati procedure consists of finding the unique negative definite solution $E\in L(X)$ of the \emph{algebraic Riccati equation} 
$$
A^* E+EA+EBB^* E=\mathrm{id} ,
$$
in the sense of $\langle (2EA+EBB^* E-\mathrm{id})y,y\rangle=0$, for every $y\in D(A)$, and then of setting $u(t)=B^* Ey(t)$.
Note that this is the control minimizing $\int_0^{+\infty}( \Vert y(t)\Vert^2+\Vert u(t)\Vert^2)\, dt$ for the control system $\dot y=Ay+Bu$.

Then, as before, the function $V(y) = -\langle y, E y\rangle$ is a Lyapunov function for the system in closed-loop $\dot y=(A+BK)y$.

Now, for the semilinear system \eqref{contsys} in closed-loop with $u(t)=B^*Ey(t)$, we have
$$
\frac{d}{dt} V(y(t)) = -\langle y(t) , ( \mathrm{id} + EBB^* E)y(t)\rangle - \langle y(t), E F(y(t))\rangle \leq -C_1 \Vert y(t)\Vert^2 \leq -C_2 V(y(t))
$$
and we infer local asymptotic stability.

\medskip

For $B\in L(U,D(A^*)')$ unbounded, things are more complicated. Roughly, the theory is complete in the parabolic case (i.e., when $A$ generates an analytic semi-group), but is incomplete in the hyperbolic case (see \cite{LasieckaTriggiani1,LasieckaTriggiani2} for details).

\paragraph{Rapid stabilization.}
An alternative method exists in the case where $A$ generates a \emph{group} $S(t)$, and $B\in L(U,D(A^*)')$ an admissible control operator (see \cite{TucsnakWeiss} for the notion of admissibility).
An example covered by this setting is the wave equation with Dirichlet control.

The strategy developed in \cite{Komornik} consists of setting
$$
u = -B^* C_\lambda^{-1} y  \qquad\textrm{with}\quad 
C_\lambda = \int_0^{T+1/2\lambda} f_\lambda(t) S(-t) B B^* S(-t)^* \, dt ,
$$
with $\lambda>0$ arbitrary, $f_\lambda(t) = e^{-2\lambda t}$ if $t\in[0,T]$ and $f_\lambda(t) = 2\lambda e^{-2\lambda T}(T+1/2\lambda-t)$ if $t\in[T,T+1/2\lambda]$. Besides, the function
$$
V(y) = \langle y, C_\lambda^{-1} y \rangle 
$$
is a Lyapunov function (as noticed in \cite{Coron}).
Actually, this feedback yields exponential stability with rate $-\lambda$ for the closed-loop system $\dot y = (A-BB^*C_\lambda^{-1})y$, whence the wording ``rapid stabilization" since $\lambda>0$ is arbitrary.

Then, thanks to that Lyapunov function, the above rapid stabilization procedure applies as well to the semilinear control system \eqref{contsys}, yielding a local stabilization result (with any exponential rate).

\subsection{Existing results for discretizations}
We recall hereafter existing convergence results for space semi-discretizations of the Riccati procedure. We denote by $E_N$ the approximate Riccati solution, expecting that $E_N\rightarrow E$ as $N\rightarrow +\infty$.

One can find in \cite{BanksIto1997, BanksKunisch1984, Gibson1979, KappelSalamon1990,LiuZheng} a general result showing convergence of the approximations $E_N$ of the Riccati operator $E$, under assumptions of \emph{uniform} exponential stabilizability, and of \emph{uniform} boundedness of $E_N$:
$$
\Vert S_{A_N+B_NB_N^\top E_N}(t)\Vert \leq Ce^{-\delta t},\qquad
\Vert E_N\Vert\leq M ,
$$
for every $t\geq 0$ and every $N\in\N^*$, with uniform positive constants $C$, $\delta$ and $M$.

In \cite{BanksIto1997, LasieckaTriggiani1, LiuZheng}, the convergence of $E_N$ to $E$ is proved in the general parabolic case, for unbounded control operators, that is, when $A:D(A)\rightarrow X$ generates an analytic semi-group, $B\in L(U,D(A^*)')$, and $(A,B)$ is exponentially stabilizable.

The situation is therefore definitive in the parabolic setting. In contrast, if the semi-group $S(t)$ is not analytic, the theory is not complete.
Uniform exponential stability is proved under uniform Huang-Pr\"uss conditions in \cite{LiuZheng}. More precisely, it is proved that, given a sequence $(S_n(\cdot))$ of $C_0$ semi-groups on $X_n$, of generators $A_n$, $(S_n(\cdot))$ is uniformly exponentially stable if and only if $i\R\subset\rho(A_n)$ for every $n\in\N$ and
$$
\sup_{\beta\in\R,\ n\in\N} \Vert (i\beta\mathrm{id}-A_n)^{-1}\Vert < +\infty .
$$
This result is used e.g. in \cite{RTT_COCV2007} to prove uniform stability of second-order equations with (bounded) damping and with viscosity term, under uniform gap condition on the eigenvalues.

This result also allows to obtain convergence of the Riccati operators, for second-order systems
$$\ddot y + A y = Bu,$$
with $A:D(A)\rightarrow X$ positive selfadjoint, with compact inverse, and $B\in L(U,X)$ (bounded control operator).

But the approximation theory for general LQR problems remains incomplete in the general hyperbolic case with unbounded control operators, for instance it is not done for wave equations with Dirichlet boundary control.

\subsection{Conclusion}
Concerning implementation issues, solving Riccati equations (or computing Gramians, in the case of rapid stabilization) in large dimension is computationally demanding. In what follows, we would like to find other ways to proceed.

Our objective is therefore to design \emph{simple} feedbacks with \emph{efficient} approximation procedures.

In the sequel, we are going to investigate two situations:
\begin{itemize}
\item \emph{Parabolic} case ($A$ generates an analytic semi-group): we are going to show how to design feedbacks based on a small number of spectral modes.
\item \emph{Hyperbolic} case, i.e., $A=-A^*$ in \eqref{contsys}: we are going to consider two ``"simple" feedbacks:
\begin{itemize}
\item Linear feedback $u=-B^*y$: in that case, if $F=0$ then $\frac{1}{2}\frac{d}{dt}\Vert y(t)\Vert^2 = - \Vert B^*y(t)\Vert^2$.
We will investigate the question of how to ensure uniform exponential decay for approximations.
\item Nonlinear feedback $u=B^*G(y)$: we will as well investigate the question of how to ensure uniform (sharp) decay for approximations.
\end{itemize}
\end{itemize}

\section{Parabolic PDE's}
In this section, we assume that the operator $A$ in \eqref{contsys} generates an analytic semi-group. Our objective is to design stabilizing feedbacks based on a small number of spectral modes.

To simplify the exposition, we consider a 1D semilinear heat equation, and we will comment further on extensions.

\medskip

Let $L>0$ be fixed and let $f:\R\rightarrow\R$ be a function of class $C^2$ such that $f(0)=0$. Following \cite{CoronTrelat_SICON2004}, We consider the 1D semilinear heat equation
\begin{equation} \label{eqcont0}
y_t = y_{xx}+f(y),  \qquad  y(t,0)=0,\ y(t,L)=u(t),
\end{equation}
where the state is $y(t,\cdot):[0,L]\rightarrow\R$ and the control is $u(t)\in \R$.

We want to design a feedback control locally stabilizing \eqref{eqcont0} asymptotically to $0$.
Note that this cannot be global, because we can have other steady-states (a steady-state is a function $y\in C^2(0,L)$ such that $y''(x)+f(y(x))=0$ on $(0,L)$ and $y(0)=0$).
By the way, here, without loss of generality we consider the steady-state $0$.

Let us first note that, for every $T>0$, \eqref{eqcont0} is well posed in the Banach space 
$$
Y_T = L^2(0,T;H^2(0,L))\cap H^1(0,T;L^2(0,L)),
$$
which is continuously embedded in $L^\infty((0,T)\times(0,L))$.\footnote{Indeed, considering $v\in L^2(0,T;H^2(0,L))$ with $v_t\in H^1(0,T;L^2(0,L))$, writing $v=\sum_{j,k}c_{jk}e^{ijt}e^{ikx}$, we have
$$
\sum_{j,k}\vert c_{jk}\vert\leq \left( \sum_{j,k} \frac{1}{1+j^2+k^4} \right)^{1/2} \left( \sum_{j,k} (1+j^2+k^4)\vert c_{jk}\vert^2 \right)^{1/2} ,
$$
and these series converge, whence the embedding, allowing to give a sense to $f(y)$.

Now, if $y_1$ and $y_2$ are solutions of \eqref{eqcont0} on $[0,T]$, then $y_1=y_2$. Indeed,  $v=y_1-y_2$ is solution of $v_t=v_{xx}+a\, v$, $v(t,0)=v(t,L)=0$, $v(0,x)=0$, with $a(t,x)=g(y_1(t,x),y_2(t,x))$ where $g$ is a function of class $C^1$. We infer that $v=0$.}

First of all, in order to end up with a Dirichlet problem, we set 
$$
z(t,x)=y(t,x)-\frac{x}{L}u(t).
$$
Assuming (for the moment) that $u$ is differentiable, we set $v(t)=u'(t)$, and we consider in the sequel $v$ as a control. We also assume that $u(0)=0$. Then we have
\begin{equation}\label{reducedproblem2}
z_t=z_{xx}+f'(0)z+\frac{x}{L}f'(0)u-\frac{x}{L}v+r(t,x),  \qquad
z(t,0)=z(t,L)=0,  
\end{equation}
with $z(0,x)=y(0,x)$ and
\begin{equation*}
r(t,x)= \left(z(t,x)+\frac{x}{L}u(t)\right)^2\int_0^1 (1-s)f''\left(sz(s,x)+s\frac{x}{L}u(s)\right)ds.
\end{equation*}
Note that, given $B>0$ arbitrary, there exist positive constants $C_1$ and $C_2$ such that, if $\vert u(t)\vert\leq B$ and $\Vert z(t,\cdot)\Vert_{L^\infty(0,L)}\leq B$, then
\begin{equation*}
\Vert r(t,\cdot)\Vert_{L^\infty(0,L)}\leq C_1(u(t)^2 +\Vert z(t,\cdot)\Vert_{L^\infty(0,L)}^2) \leq C_2(u(t)^2 +\Vert z(t,\cdot)\Vert_{H^1_0(0,L)}^2) .
\end{equation*}
In the sequel, $r(t,x)$ will be considered as a remainder.

We define the operator $A=\triangle+f'(0)\mathrm{id}$ on $D(A) = H^2(0,L)\cap H_0^1(0,L)$, so that \eqref{reducedproblem2} is written as
\begin{equation}\label{reducedproblem3}
\dot u=v,\qquad z_t=Az+au+bv+r,  \qquad z(t,0)=z(t,L)=0,  
\end{equation}
with $a(x) = \frac{x}{L}f'(0)$ and $b(x)=-\frac{x}{L}$.

Since $A$ is selfadjoint and has a compact resolvent, there exists a Hilbert basis $(e_j)_{j\geq 1}$ of $L^2(0,L)$, consisting of eigenfunctions $e_j\in H^1_0(0,L)\cap C^2([0,L])$ of $A$, associated with eigenvalues $(\lambda_j)_{j\geq 1}$ such that $-\infty<\cdots<\lambda_n<\cdots<\lambda_1$ and $\lambda_n\rightarrow-\infty$ as $n\rightarrow+\infty$.

Any solution $z(t,\cdot)\in H^2(0,L)\cap H^1_0(0,L)$ of \eqref{reducedproblem2}, as long as it is well defined, can be expanded as a series 
$$
z(t,\cdot)=\sum_{j=1}^{\infty}z_j(t)e_j(\cdot)
$$
(converging in $H_0^1(0,L)$), and then we have, for every $j\geq 1$,
\begin{equation*}
\dot z_j(t) = \lambda_j z_j(t) + a_j u(t) + b_j v(t) + r_j(t),
\end{equation*}
with
$$
a_j= \frac{f'(0)}{L}\int_0^L xe_j(x)\, dx, \quad
b_j= -\frac{1}{L}\int_0^L xe_j(x)\, dx, \quad
r_j(t)=\int_0^L r(t,x)e_j(x)\, dx.
$$

Setting, for every $n\in\N^*$,
\begin{equation*}
X_n(t)=\begin{pmatrix} u(t) \\ z_1(t) \\ \vdots \\ z_n(t)
\end{pmatrix}, \
A_n=\begin{pmatrix}
0         &       0         & \cdots &    0           \\
a_1 & \lambda_1 & \cdots &    0           \\
\vdots    &  \vdots         & \ddots &   \vdots       \\
a_n &  0              & \cdots & \lambda_n
\end{pmatrix} , \
B_n=\begin{pmatrix} 1 \\ b_1 \\ \vdots \\ b_n \end{pmatrix} , \ 
R_n(t)=\begin{pmatrix} 0 \\ r_1(t) \\ \vdots \\ r_n(t) \end{pmatrix},
\end{equation*}
we have, then,
\begin{equation*}
\dot X_n(t) = A_nX_n(t) + B_n v(t) + R_n(t).
\end{equation*}

\begin{lemma}
The pair $(A_n,B_n)$ satisfies the Kalman condition.
\end{lemma}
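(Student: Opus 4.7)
The plan is to exploit the block structure of $A_n$ and $B_n$ in order to reduce the Kalman condition to the nonsingularity of a Krylov matrix associated with the diagonal block $\mathrm{diag}(\lambda_1,\ldots,\lambda_n)$. Since $A_n$ acts on $\R^{n+1}$, I need to show that the $(n+1)\times(n+1)$ matrix $\mathcal{K}_n=(B_n\mid A_nB_n\mid\cdots\mid A_n^nB_n)$ has rank $n+1$.

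First I would write, with $a=(a_1,\ldots,a_n)^\top$, $b=(b_1,\ldots,b_n)^\top$, and $D=\mathrm{diag}(\lambda_1,\ldots,\lambda_n)$,
$$A_n=\begin{pmatrix} 0 & 0 \\ a & D \end{pmatrix},\qquad B_n=\begin{pmatrix} 1 \\ b \end{pmatrix}.$$
A short induction then yields $A_n^kB_n=\begin{pmatrix} 0 \\ D^{k-1}(a+Db)\end{pmatrix}$ for every $k\geq 1$. Since the first row of $\mathcal{K}_n$ is $(1,0,\ldots,0)$, expanding by that row shows that $\mathrm{rank}\,\mathcal{K}_n = 1+\mathrm{rank}\,(c,Dc,D^2c,\ldots,D^{n-1}c)$, where $c=a+Db\in\R^n$.

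Next I would use the fact that $D$ is diagonal with pairwise distinct entries $\lambda_1,\ldots,\lambda_n$: in this situation the Krylov matrix $(c,Dc,\ldots,D^{n-1}c)$ factorises as the product of a Vandermonde matrix built from the $\lambda_j$'s and the diagonal matrix $\mathrm{diag}(c_1,\ldots,c_n)$, and hence it has full rank $n$ if and only if every component $c_j$ of $c$ is nonzero. So the entire argument reduces to checking that $c_j=a_j+\lambda_jb_j\neq 0$ for all $j$.

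Finally, from the explicit definitions of $a_j$ and $b_j$ one reads off the identity $a_j=-f'(0)\,b_j$, so $c_j=(\lambda_j-f'(0))\,b_j$. The operator $A=\partial_{xx}+f'(0)\,\mathrm{id}$ with Dirichlet conditions has eigenvalues $\lambda_j=-j^2\pi^2/L^2+f'(0)$, so $\lambda_j-f'(0)\neq 0$; and the normalized eigenfunctions $e_j(x)=\sqrt{2/L}\sin(j\pi x/L)$ yield $b_j=-\frac{1}{L}\int_0^L x\,e_j(x)\,dx = \sqrt{2/L}\,(-1)^j L/(j\pi)\neq 0$. Therefore $c_j\neq 0$ for every $j$, and the Kalman matrix has the required rank $n+1$. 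The only step that is not bookkeeping is the verification that $b_j\neq 0$ for every $j$; this is what makes the first mode actually controllable through the lone boundary control, and any degenerate situation in which some eigenfunction would happen to be $L^2$-orthogonal to $x$ would break the argument — fortunately an explicit computation rules it out on $(0,L)$.
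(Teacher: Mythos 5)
Your argument is correct, and its skeleton coincides with the paper's: both reduce the Kalman condition to the nonvanishing of the products $a_j+\lambda_j b_j$ via a Vandermonde factorisation (the paper computes the determinant $\det(B_n,A_nB_n,\ldots,A_n^nB_n)=\prod_{j=1}^n(a_j+\lambda_jb_j)\,\textrm{VdM}(\lambda_1,\ldots,\lambda_n)$ directly, you get the same thing through the block/Krylov reduction, which is a cleaner way to organise the bookkeeping). Where you genuinely diverge is in the final nonvanishing step. You use $a_j=-f'(0)b_j$ to write $c_j=(\lambda_j-f'(0))b_j$ and then verify $b_j\neq 0$ by plugging in the explicit eigenfunctions $e_j(x)=\sqrt{2/L}\,\sin(j\pi x/L)$ of $\partial_{xx}+f'(0)\,\mathrm{id}$; this is legitimate here precisely because the linearisation is at the zero steady state, so the operator has constant coefficients and an explicit sine basis. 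The paper instead integrates by parts to obtain the identity $a_j+\lambda_jb_j=-e_j'(L)$ and invokes Cauchy uniqueness for second-order linear ODEs ($e_j(L)=e_j'(L)=0$ would force $e_j\equiv 0$). The paper's route costs one integration by parts but buys robustness: it never uses the explicit form of $e_j$ or $\lambda_j$, so it survives when the linearisation is taken around a nonzero steady state (operator $\partial_{xx}+c(x)\,\mathrm{id}$ with variable $c$), which is exactly the situation exploited in the quasi-static deformation results of \cite{CoronTrelat_SICON2004}. Your closing remark correctly identifies $b_j\neq 0$ as the only non-trivial point, but note that the "degenerate situation in which some eigenfunction would happen to be $L^2$-orthogonal to $x$" is ruled out in general by the $-e_j'(L)$ identity, not only by the explicit computation.
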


\begin{proof}
We compute
\begin{equation}\label{deter}
\mathrm{det} \left( B_n, A_nB_n, \ldots, A_n^{n}B_n \right)
= \prod_{j=1}^{n}(a_j+\lambda_jb_j)\
\textrm{VdM}(\lambda_1,\ldots,\lambda_n) ,
\end{equation}
where $\mathrm{VdM}(\lambda_1,\ldots,\lambda_n)$ is a Van der Monde determinant, and thus is never equal to zero since the $\lambda_i$, $i=1\ldots n$, are pairwise distinct.
On the other part, using the fact that each $e_j$ is an eigenfunction of $A$ and belongs to $H^1_0(0,L)$, we compute
\begin{equation*}
a_j+\lambda_jb_j
= \frac{1}{L} \int_0^L x ( f'(0)-\lambda_j)e_j(x) \, dx
= -\frac{1}{L} \int_0^L x e_j''(x) \, dx
= - e_j'(L),
\end{equation*}
and this quantity is never equal to zero since $e_j(L)=0$ and $e_j$ is a nontrivial solution of
a linear second-order scalar differential equation.
Therefore the determinant \eqref{deter} is never equal to zero.
\end{proof}

By the pole-shifting theorem, there exists $K_n=\left( k_0,\ldots,k_n \right)$ such that the matrix $A_n+B_nK_n$ has $-1$ as an eigenvalue of multiplicity $n+1$. Moreover, by the Lyapunov lemma, there exists a symmetric positive definite matrix $P_n$ of size $n+1$ such that
\begin{equation*}
P_n\left(A_n+B_nK_n\right) + \left(A_n+B_nK_n\right)^\top P_n = -I_{n+1} .
\end{equation*}
Therefore, the function defined by $V_n(X) = X^\top P_n X$ for any $X\in\R^{n+1}$ is a Lyapunov function for the closed-loop system $\dot X_n(t) = (A_n+B_nK_n)X_n(t)$.

Let $\gamma>0$ and $n\in\N^*$ to be chosen later. For every $u\in\R$ and every $z\in H^2(0,L)\cap H^1_0(0,L)$, we set
\begin{equation}\label{defLyapV1216}
V(u,z)=\gamma\, X_n^\top P_n X_n - \frac{1}{2}\langle z,Az\rangle_{L^2(0,L)}
= \gamma\, X_n^\top P_n X_n - \frac{1}{2}\sum_{j=1}^\infty \lambda_jz_j^2,
\end{equation}
where $X_n\in\R^{n+1}$ is defined by $X_n=(u,z_1,\ldots,z_n)^\top$ and $z_j=\langle z(\cdot),e_i(\cdot)\rangle_{L^2(0,L)}$ for every $j$.

Using that $\lambda_n\rightarrow-\infty$ as $n\rightarrow+\infty$, it is clear that, choosing $\gamma>0$ and $n\in\N^*$ large enough, we have $V(u,z)>0$ for all $(u,z)\in\R\times (H^2(0,L)\cap H^1_0(0,L))\setminus\{(0,0)\}$. More precisely, there exist positive constants $C_3$, $C_4$, $C_5$ and $C_6$ such that
\begin{multline*}
C_3\left( u^2+\Vert z\Vert_{H^1_0(0,L)}^2\right)\ \leq\ V(u,z)\ \leq\ C_4\left( u^2+\Vert z\Vert_{H^1_0(0,L)}^2\right),\\
V(u,z)\ \leq\ C_5\left( \Vert X_n\Vert_2^2 + \Vert Az\Vert_{L^2(0,L)}^2 \right) ,\qquad
\gamma C_6\Vert X_n\Vert_2^2\ \leq\ V(u,z) ,
\end{multline*}
for all $(u,z)\in\R\times (H^2(0,L)\cap H^1_0(0,L))$. Here, $\Vert\ \Vert_2$ designates the Euclidean norm of $\R^{n+1}$.

Our objective is now to prove that $V$ is a Lyapunov function for the system \eqref{reducedproblem3} in closed-loop with the control $v=K_nX_n$.

In what follows, we thus take $v = K_n X_n$ and $u$ defined by $\dot u=v$ and $u(0)=0$.
We compute
\begin{multline}\label{dVuzdt}
\frac{d}{dt} V(u(t),z(t))=-\gamma\,\Vert X_n(t)\Vert_2^2-\Vert Az(t,\cdot)\Vert_{L^2}^2-\langle
Az(t,\cdot),a(\cdot)\rangle_{L^2}u(t) \\ 
\qquad\qquad\qquad\qquad -\langle Az(t,\cdot),b(\cdot)\rangle_{L^2}K_nX_n(t) -\langle Az(t,\cdot),r(t,\cdot)\rangle_{L^2} \\
+\gamma\left(R_n(t)^\top P_nX_n(t)+X_n(t)^\top P_nR_n(t)\right) .
\end{multline}
Let us estimate the terms at the right-hand side of \eqref{dVuzdt}. Under the a priori estimates $\vert u(t)\vert\leq B$ and $\Vert z(t,\cdot)\Vert_{L^\infty(0,L)}\leq B$, there exist positive constants $C_7$, $C_8$ and $C_9$ such that
\begin{equation*}
\begin{split}
& \vert \langle Az,a\rangle_{L^2}u \vert+\vert \langle Az,b\rangle_{L^2}K_nX_n \vert \leq \frac{1}{4} \Vert Az\Vert_{L^2}^2+C_7\Vert X_n\Vert_2^2 , \\
& \vert\langle Az,r\rangle_{L^2}\vert \leq \frac{1}{4} \Vert Az\Vert_{L^2}^2+C_8V^2 ,\qquad
\Vert R_n\Vert_\infty \leq \frac{C_2}{C_3} V , \\
& \vert\gamma\left(R_n^\top P_nX_n+X_n^\top P_nR_n\right)\vert \leq \frac{C_2}{C_3\sqrt{C_6}}\sqrt{\gamma}\,V^{3/2} .
\end{split}
\end{equation*}
We infer that, if $\gamma>0$ is large enough, then there exist positive constants $C_{10}$ and $C_{11}$ such that $\frac{d}{dt} V \leq -C_{10}V+C_{11}V^{3/2}$.
We easily conclude the local asymptotic stability of the system \eqref{reducedproblem3} in closed-loop with the control $v=K_nX_n$.

\begin{remark}
Of course, the above local asymptotic stability may be achieved with other procedures, for instance, by using the Riccati theory. However, the procedure developed here is much more efficient because it consists of stabilizing a finite-dimensional part of the system, mainly, the part that is not naturally stable. We refer to \cite{CoronTrelat_SICON2004} for examples and for more details. Actually, we have proved in that reference that, thanks to such a strategy, we can pass from any steady-state to any other one, provided that the two steady-states belong to a same connected component of the set of steady-states: this is a partially global exact controllability result.
\end{remark}

The main idea used above is the following fact, already used in the remarkable early paper \cite{Russell}. Considering the linearized system with no control, we have an infinite-dimensional linear system that can be split, through a spectral decomposition, in two parts: the first part is finite-dimensional, and consists of all spectral modes that are unstable (meaning that the corresponding eigenvalues have nonnegative real part); the second part is infinite-dimensional, and consists of all spectral modes that are asymptotically stable (meaning that the corresponding eigenvalues have negative real part).
The idea used here then consists of focusing on the finite-dimensional unstable part of the system, and to design a feedback control in order to stabilize that part. Then, we plug this control in the infinite-dimensional system, and we have to check that this feedback indeed stabilizes the whole system (in the sense that it does not destabilize the other infinite-dimensional part). This is the role of the Lyapunov function $V$ defined by \eqref{defLyapV1216}.

The extension to general systems \eqref{contsys} is quite immediate, at least in the parabolic setting under appropriate spectral assumptions (see \cite{SchmidtTrelat} for Couette flows and \cite{CoronTrelatVazquez} for Navier-Stokes equations).

But it is interesting to note that it does not work only for parabolic equations: this idea has been as well used in \cite{CoronTrelat_CCM2006} for the 1D semilinear equation
$$
y_{tt} = y_{xx}+f(y),  \qquad  y(t,0)=0,\ y_x(t,L)=u(t),
$$
with the same assumptions on $f$ as before. We first note that, if $f(y)=cy$ is linear (with $c\in L^\infty(0,L)$), then, setting $u(t) = -\alpha y_t(t,L)$ with $\alpha>0$ yields an exponentially decrease of the energy $\int_0^L ( y_t(t,x)^2+y_x(t,x)^2 )\, dt$, and moreover, the eigenvalues of the corresponding operator have a real part tending to $-\infty$ as $\alpha$ tends to $1$. Therefore, in the general case, if $\alpha$ is sufficiently close to $1$ then at most a finite number of eigenvalues may have a nonnegative real part. Using a Riesz spectral expansion, the same kind of method as the one developed above can therefore be applied, and yields a feedback based on a finite number of modes, that stabilizes locally the semilinear wave equation, asymptotically to equilibrium.

\section{Hyperbolic PDE's}
In this section, we assume that the operator $A$ in \eqref{contsys} is skew-adjoint, that is,
$$
A^*=-A,\quad D(A^*)=D(A).
$$

Let us start with a simple remark. If $F=0$ (linear case), then, choosing the very simple linear feedback $u=-B^*y$ and setting $V(y) = \frac{1}{2}\frac{d}{dt}\Vert y\Vert_X^2$, we have
$$
\frac{d}{dt} V(y(t)) = - \Vert B^*y(t)\Vert_X^2 \leq 0 ,
$$
and then we expect that, under reasonable assumptions, we have exponential asymptotic stability (and this will be the case under observability assumptions, as we are going to see).

Now, if we choose a nonlinear feedback $u=B^*G(y)$, we ask the same question: what are sufficient conditions ensuring asymptotic stability, and if so, with which sharp decay?

Besides, we will investigate the following important question: how to ensure \emph{uniform} properties when discretizing?

\subsection{The continuous setting}
\subsubsection{Linear case}
In this subsection, we assume that $F=0$ (linear case), and we assume that $B$ is bounded. Taking the linear feedback $u=-B^*y$ as said above, we have the closed-loop system $\dot y = Ay - BB^* y$. For convenience, in what follows we rather write this equation in the form (more standard in the literature)
\begin{equation}\label{syslinear}
\dot y(t) + Ay(t) + By(t) = 0,
\end{equation}
where $A$ is a densely defined skew-adjoint operator on $X$ and $B$ is a bounded nonnegative selfadjoint operator on $X$ (we have just replaced $A$ with $-A$ and $BB^*$ with $B$).

We start hereafter with the question of the exponential stability of solutions of \eqref{syslinear}.

\paragraph{Equivalence between observability and exponential stability.}
The following result is a generalization of the main result of \cite{Haraux}.

\begin{theorem}\label{thm_Haraux}
Let $X$ be a Hilbert space, let $A:D(A)\rightarrow X$ be a densely defined skew-adjoint operator, let $B$ be a bounded selfadjoint nonnegative operator on $X$.
We have equivalence of:
\begin{enumerate}
\item There exist $T>0$ and $C>0$ such that every solution of the conservative equation 
\begin{equation}\label{lin_conservative}
\dot \phi(t)+A\phi(t)=0
\end{equation}
satisfies the observability inequality
$$\Vert \phi(0)\Vert_X^2 \leq C\int_0^{T_0}\Vert B^{1/2}\phi(t)\Vert_X^2 \, dt .$$
\item There exist $C_1>0$ and $\delta>0$ such that every solution of the damped equation
\begin{equation}\label{lin_damped}
\dot y(t) + Ay(t) + By(t) = 0
\end{equation}
satisfies
$$E_y(t) \leq C_1 E_y(0)\mathrm{e}^{-\delta t} ,$$
where 
$E_y(t) = \frac{1}{2} \Vert y(t)\Vert_X^2 $.
\end{enumerate}
\end{theorem}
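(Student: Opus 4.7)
The common backbone of both implications is the energy identity for \eqref{lin_damped}: since $A$ is skew-adjoint and $B$ is selfadjoint nonnegative, pairing \eqref{lin_damped} with $y(t)$ yields
$$E_y(0)-E_y(T)=\int_0^T\Vert B^{1/2}y(t)\Vert_X^2\, dt,$$
so $E_y$ is nonincreasing and $-(A+B)$ generates a contraction semi-group on $X$. The strategy in both directions is to compare a damped trajectory with the conservative one issued from the same initial datum, writing the difference $w$ as the solution of an inhomogeneous equation with zero initial value.

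For (2)$\Rightarrow$(1), my plan is first to extract from the exponential decay the quantitative bound $\int_0^T\Vert B^{1/2}y\Vert_X^2\,dt\geq \tfrac{1}{4}\Vert y(0)\Vert_X^2$, by choosing $T$ large enough that $C_1 e^{-\delta T}\leq 1/2$ and inserting this into the energy identity. Given any conservative solution $\phi$, I would let $y$ solve \eqref{lin_damped} with $y(0)=\phi(0)$ and set $w=y-\phi$; a direct computation gives $\dot w+Aw+Bw=-B\phi$ with $w(0)=0$. Pairing with $w$ and applying Young's inequality lets the cross-term be absorbed into the dissipation $\Vert B^{1/2}w\Vert_X^2$, yielding $\int_0^T\Vert B^{1/2}w\Vert_X^2\,dt\leq \int_0^T\Vert B^{1/2}\phi\Vert_X^2\,dt$. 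The triangle inequality then gives $\int_0^T\Vert B^{1/2}y\Vert_X^2\,dt\leq 4\int_0^T\Vert B^{1/2}\phi\Vert_X^2\,dt$, and combining with the lower bound above produces the observability inequality for $\phi$ with an explicit constant.

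For (1)$\Rightarrow$(2), I would reverse the roles: given a damped solution $y$, let $\phi$ solve \eqref{lin_conservative} with $\phi(0)=y(0)$ and set $w=\phi-y$, so that $\dot w+Aw=By$ and $w(0)=0$. This time there is no dissipative $Bw$ term on the left, so instead of a clean absorption I would use Gr\"onwall's inequality to obtain an estimate of the form
$$\int_0^T\Vert B^{1/2}w\Vert_X^2\,dt\leq \Vert B\Vert\, T\, e^{\Vert B\Vert T}\int_0^T\Vert B^{1/2}y\Vert_X^2\,dt.$$
Feeding this into the observability inequality for $\phi$ together with the triangle inequality and the identity $\Vert y(0)\Vert_X=\Vert\phi(0)\Vert_X$ produces
$$2E_y(0)\leq K\bigl(E_y(0)-E_y(T)\bigr),\qquad K=2C\bigl(1+\Vert B\Vert T e^{\Vert B\Vert T}\bigr),$$
hence $E_y(T)\leq \rho E_y(0)$ with $\rho=\max\{0,1-2/K\}\in[0,1)$. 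Iterating on successive intervals of length $T$, and using the monotonicity of $E_y$ to interpolate in between, delivers the claimed exponential decay with rate $-\log(\rho)/T$.

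I expect the main obstacle to be the estimate on $w$ in the second implication: because $\dot w+Aw=By$ has no damping on the left, the source cannot be absorbed and Gr\"onwall forces a constant that is exponential in $T$. This is harmless here since $T$ is fixed by the observability hypothesis, but it is the reason why the resulting decay rate $\delta$ depends on $T$, $C$ and $\Vert B\Vert$ in a nontrivial way. In the first implication, by contrast, the $+Bw$ term in $\dot w+Aw+Bw=-B\phi$ supplies exactly the dissipation needed to balance the Cauchy--Schwarz cross-term, so the comparison is $T$-independent and the argument is essentially automatic.
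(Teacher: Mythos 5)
Your proposal is correct, and both implications go through as you describe; but the technical core differs from the paper's in a way worth recording. The skeleton is identical: compare the damped and conservative trajectories issued from the same initial datum, observe that their difference solves an inhomogeneous problem with zero initial value, estimate $\int_0^T\Vert B^{1/2}(\text{difference})\Vert_X^2\,dt$ in terms of the known trajectory, and close the loop with the energy identity $\int_0^T\Vert B^{1/2}y\Vert_X^2\,dt=E_y(0)-E_y(T)$ followed by iteration over intervals $[kT,(k+1)T]$. Where you diverge is in how the difference is estimated. The paper uses a single device in both directions: it integrates the scalar identity for $E_\theta$ twice in time, applies Fubini to produce a $(T-t)$-weighted cross term, and then Young's inequality, which bounds the full norm $\int_0^T\Vert\theta\Vert_X^2\,dt$ by $4T^2\Vert B^{1/2}\Vert^2\int_0^T\Vert B^{1/2}(\text{source})\Vert_X^2\,dt$; notably, in the direction (2)$\Rightarrow$(1) it writes the conservative equation as $\dot\phi+A\phi+B\phi=B\phi$ and simply discards the resulting dissipative term $\langle B\theta,\theta\rangle\geq 0$ rather than exploiting it. You instead integrate only once: in (2)$\Rightarrow$(1) you genuinely use the dissipation $Bw$ to absorb half of the Young-split source, obtaining the clean, $T$-independent comparison $\int_0^T\Vert B^{1/2}w\Vert_X^2\,dt\leq\int_0^T\Vert B^{1/2}\phi\Vert_X^2\,dt$ without ever needing to control $\int_0^T\Vert w\Vert_X^2\,dt$ — this is simpler and sharper than the paper's estimate for that step. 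In (1)$\Rightarrow$(2), where no dissipation is available, your Gr\"onwall argument yields a constant of order $\Vert B\Vert Te^{\Vert B\Vert T}$ where the paper's double-integration trick yields the polynomial constant $2+8T^2\Vert B^{1/2}\Vert^4$. For the qualitative equivalence this is immaterial since $T$ is fixed, but the polynomial dependence is what the paper carries forward into the explicit decay rates of the later theorems (e.g.\ $\gamma_2\simeq C_T/T(T^2\Vert B^{1/2}\Vert^4+1)$ in Theorem~\ref{thm_continuous}), so if you intend to track constants you may prefer to adopt the $(T-t)$-weight argument in that direction while keeping your absorption argument in the other.
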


\begin{proof}
Let us first prove that the first property implies the second one: we want to prove that every solution of \eqref{lin_damped} satisfies
$$
E_y(t) = \frac{1}{2} \Vert y(t)\Vert_X^2 \leq E_y(0)\mathrm{e}^{-\delta t} = \frac{1}{2} \Vert y(0)\Vert_X^2  \mathrm{e}^{-\delta t}.
$$
Consider $\phi$ solution of \eqref{lin_conservative} with $\phi(0)=y(0)$.
Setting $\theta=y-\phi$, we have 
$$
\dot\theta+A\theta+By=0,\quad \theta(0)=0.
$$
Then, taking the scalar product with $\theta$, since $A$ is skew-adjoint, we get 
$\langle\dot\theta+By,\theta\rangle_X=0$.
But, setting $E_\theta(t)=\frac{1}{2} \Vert \theta(t)\Vert_X^2 $, we have
$ \dot E_\theta = -\langle By,\theta\rangle_X$.
Then, integrating a first time over $[0,t]$, and then a second time over $[0,T]$, since $E_\theta(0)=0$, we get
$$
\int_0^T E_\theta(t)\,dt = - \int_0^T\int_0^t \langle By(s),\theta(s)\rangle_X \,ds\, dt \\
= -\int_0^T (T-t) \langle B^{1/2}y(t),B^{1/2}\theta(t)\rangle_X\, dt ,
$$
where we have used the Fubini theorem.
Hence, thanks to the Young inequality $ab\leq\frac{\alpha}{2}a^2+\frac{1}{2\alpha}b^2$ with $\alpha=2$, we infer that
\begin{equation*}
\begin{split}
\frac{1}{2}\int_0^T  \Vert \theta(t)\Vert_X^2 \, dt
&\leq T \Vert B^{1/2}\Vert \int_0^T \Vert  B^{1/2}y(t)\Vert_X\Vert\theta(t)\Vert_X \, dt \\
&\leq T^2 \Vert B^{1/2}\Vert^2 \int_0^T \Vert  B^{1/2}y(t)\Vert_X^2 \, dt +  \frac{1}{4}\int_0^T \Vert\theta(t)\Vert_X^2 \, dt,
\end{split}
\end{equation*}
and therefore,
$$
\int_0^T \Vert\theta(t)\Vert_X^2 \, dt \leq 4 T^2 \Vert B^{1/2}\Vert_X^2 \int_0^T \Vert  B^{1/2}y(t)\Vert_X^2 \, dt.
$$
Now, since $\phi=y-\theta$, it follows that
\begin{equation*}
\begin{split}
\int_0^T \Vert B^{1/2}\phi(t)\Vert_X^2 \, dt 
& \leq 2 \int_0^T \Vert B^{1/2}y(t)\Vert_X^2 \, dt + 2\int_0^T \Vert B^{1/2}\theta(t)\Vert_X^2 \, dt \\
& \leq (2+8 T^2 \Vert B^{1/2}\Vert^4) \int_0^T \Vert B^{1/2} y(t)\Vert_X^2 \, dt.
\end{split}
\end{equation*}
Finally, since
$$
E_y(0)=E_\phi(0)=\frac{1}{2} \Vert \phi(0)\Vert_X^2 \leq \frac{C}{2}\int_0^{T}\Vert B^{1/2}\phi(t)\Vert_X^2 \, dt
$$
it follows that
$$
E_y(0) \leq C (1+4 T^2 \Vert B^{1/2}\Vert^4) \int_0^T \Vert B^{1/2} y(t)\Vert_X^2 \, dt.
$$
Besides, one has $E_y'(t)=-\Vert B^{1/2} y(t)\Vert_X^2$, and then
$\int_0^T \Vert B^{1/2} y(t)\Vert_X^2 dt=E_y(0)-E_y(T)$.
Therefore
$$E_y(0) \leq C (1+4 T^2 \Vert B^{1/2}\Vert^4) (E_y(0)-E_y(T))=C_1(E_y(0)-E_y(T))$$
and hence
$$
E_y(T)\leq \frac{C_1-1}{C_1} E_y(0) = C_2 E_y(0),
$$
with $C_2<1$.

Actually this can be done on every interval $[kT,(k+1)T]$, and it yields
$E_y((k+1)T)\leq C_2 E_y(kT)$ for every $k\in\N$, and hence $E_y(kT)\leq E_y(0)C_2^k$.

For every $t\in [kT,(k+1)T)$, noting that $k=\left[\frac{t}{T}\right]> \frac{t}{T}-1$, and that $\ln\frac{1}{C_2}>0$, it follows that
$$
C_2^k=\exp(k\ln C_2)=\exp(-k\ln\frac{1}{C_2})\leq
\frac{1}{C_2}\exp(\frac{-\ln\frac{1}{C_2}}{T} t)
$$
and hence $E_y(t)\leq E_y(kT)\leq \delta E_y(0) \exp(-\delta t)$ for some $\delta>0$.

\medskip

Let us now prove the converse: assume the exponential decrease of solutions of \eqref{lin_damped}, and let us prove the observability property for solutions of \eqref{lin_conservative}.

From the exponential decrease inequality, one has
\begin{equation}\label{orl1841}
\int_0^T \Vert B^{1/2} y(t)\Vert_X^2 \, dt = E_y(0)-E_y(T) \geq (1-C_1\mathrm{e}^{-\delta T})E_y(0) = C_2 E_y(0),
\end{equation}
and for $T>0$ large enough there holds $C_2=1-C_1\mathrm{e}^{-\delta T}>0$.

Then we make the same proof as before, starting from \eqref{lin_conservative}, that we write in the form 
$$
\dot\phi+A\phi +B\phi=B\phi,
$$
and considering the solution of \eqref{lin_damped} with $y(0)=\phi(0)$.
Setting $\theta=\phi-y$, we have 
$$
\dot\theta+A\theta+B\theta=B\phi, \quad \theta(0)=0 .
$$
Taking the scalar product with $\theta$, since $A$ is skew-adjoint, we get
$\langle\dot\theta+B\theta,\theta\rangle_X=\langle B\phi,\theta\rangle_X$, 
and therefore
$$\dot E_\theta +\langle B\theta,\theta\rangle_X = \langle B\phi,\theta\rangle_X.$$
Since $\langle B\theta,\theta\rangle_X=\Vert B^{1/2}\theta\Vert_X\geq 0$, it follows that
$ \dot E_\theta \leq \langle B\phi,\theta\rangle_X$.
As before we apply $\int_0^T\int_0^t$ and hence, since $E_\theta(0)=0$,
$$
\int_0^T E_\theta(t) \, dt \leq \int_0^T\int_0^t \langle B\phi(s),\theta(s)\rangle_X \, ds\, dt = \int_0^T (T-t) \langle B^{1/2}\phi(t),B^{1/2}\theta(t)\rangle_X \, dt.
$$
Thanks to the Young inequality, we get, exactly as before,
\begin{equation*}
\begin{split}
\frac{1}{2}\int_0^T  \Vert \theta(t)\Vert_X^2 \, dt
&\leq T \Vert B^{1/2}\Vert \int_0^T \Vert  B^{1/2}\phi(t)\Vert_X \Vert\theta(t)\Vert_X \, dt \\
&\leq T^2 \Vert B^{1/2}\Vert_X^2 \int_0^T \Vert  B^{1/2}\phi(t)\Vert_X^2 \,dt +  \frac{1}{4}\int_0^T \Vert\theta(t)\Vert_X^2 \, dt ,
\end{split}
\end{equation*}
and finally,
$$
\int_0^T \Vert\theta(t)\Vert_X^2 \, dt \leq 4 T^2 \Vert B^{1/2}\Vert_X^2 \int_0^T \Vert  B^{1/2}\phi(t)\Vert_X^2 \,dt.
$$
Now, since $y=\phi-\theta$, it follows that
\begin{equation*}
\begin{split}
\int_0^T \Vert B^{1/2} y(t)\Vert_X^2 \, dt 
& \leq 2 \int_0^T \Vert B^{1/2} \phi(t)\Vert_X^2 \, dt + 2\int_0^T \Vert B^{1/2}\theta(t)\Vert_X^2 \, dt \\
& \leq (2+8 T^2 \Vert B^{1/2}\Vert^4) \int_0^T \Vert B^{1/2} \phi(t)\Vert_X^2 \, dt
\end{split}
\end{equation*}
Now, using \eqref{orl1841} and noting that $E_y(0)=E_\phi(0)$,
we infer that
$$ C_2 E_\phi(0) \leq (2+8 T^2 \Vert B^{1/2}\Vert^4) \int_0^T \Vert B^{1/2} \phi(t)\Vert_X^2 dt.$$
This is the desired observability inequality.
\end{proof}

\begin{remark}
This result says that the observability property for the linear conservative equation \eqref{lin_conservative} is equivalent to the exponential stability property for the linear damped equation \eqref{lin_damped}. This result has been written in \cite{Haraux} for second-order equations, but the proof works exactly in the same way for more general first-order systems, as shown here. More precisely, the statement done in \cite{Haraux} for second-order equations looks as follows:
\begin{quote}
{\it
We have equivalence of:
\begin{enumerate}
\item There exist $T>0$ and $C>0$ such that every solution of
$$
\ddot\phi(t)+A\phi(t)=0\qquad\qquad\textrm{(conservative equation)}
$$
satisfies
$$\Vert A^{1/2}\phi(0)\Vert_X^2+\Vert\dot\phi(0)\Vert_X^2 \leq C\int_0^{T_0}\Vert B^{1/2}\dot\phi(t)\Vert_X^2 dt .$$
\item There exist $C_1>0$ and $\delta>0$ such that every solution of
$$\ddot y(t)+Ay(t)+B\dot y(t)=0\qquad\qquad\textrm{(damped equation)}$$
satisfies
$$E_y(t) \leq C_1 E_y(0)\mathrm{e}^{-\delta t} ,$$
where
$$E_y(t) = \frac{1}{2}\left( \Vert A^{1/2}y(t)\Vert_X^2 +\Vert\dot y(t)\Vert_X^2\right).$$
\end{enumerate}
}
\end{quote}
\end{remark}

\begin{remark}
A second remark is that the proof uses in a crucial way the fact that the operator $B$ is bounded. We refer to \cite{AmmariTucsnak} for a generalization for unbounded operators with degree of unboudedness $\leq 1/2$ (i.e., $B\in L(U,D(A^{1/2})')$), and only for second-order equations, with a proof using Laplace transforms, and under a condition on the unboundedness of $B$ that is not easy to check (related to ``hidden regularity" results), namely,
$$
\forall\beta>0\qquad \sup_{\mathrm{Re}(\lambda)=\beta} \Vert B^* \lambda (\lambda^2I+A)^{-1}B \Vert_{L(U)} < +\infty.
$$
For instance this works for waves with a nonlocal operator $B$ corresponding to a Dirichlet condition, in the state space $L^2\times H^{-1}$, but not for the usual Neumann one, in the state space $H^1\times L^2$ (except in 1D).
\end{remark}

\subsubsection{Semilinear case}
In the case with a nonlinear feedback, still in order to be in agreement with standard notations used in the existing literature, we rather write the equation in the form $\dot u + Au + F(u) = 0$, where $u$ now designates the solution (and not the control).

Therefore, from now on and throughout the rest of the paper, we consider the differential system
\begin{equation}\label{main_eq}
u'(t) + A u(t) + B F(u(t))  = 0 ,
\end{equation}
with $A:D(A)\rightarrow X$ a densely defined skew-adjoint operator, $B:X\rightarrow X$ a nontrivial bounded selfadjoint nonnegative operator, and $F:X\rightarrow X$ a (nonlinear) mapping assumed to be Lipschitz continuous on bounded subsets of $X$.
This is the framework and notations adopted in \cite{AlabauPrivatTrelat}.

If $F=0$ then the system \eqref{main_eq} is purely conservative, and $\Vert u(t)\Vert_X=\Vert u(0)\Vert_X$ for every $t\geq 0$. 
If $F\neq 0$ then the system \eqref{main_eq} is expected to be dissipative if the nonlinearity $F$ has ``the good sign". Along any solution of \eqref{main_eq} (while it is well defined), the derivative with respect to time of the energy $E_u(t) = \frac{1}{2}\Vert u(t)\Vert_X^2$ is
\begin{equation*}
E_u'(t) = -\langle u(t),BF(u(t))\rangle_X = -\langle B^{1/2}u(t),B^{1/2}F(u(t))\rangle_X .
\end{equation*}
In the sequel, we will make appropriate assumptions on $B$ and on $F$ ensuring that $E_u'(t)\leq 0$. It is then expected that the solutions are globally well defined and that their energy  decays asymptotically to $0$ as $t\rightarrow +\infty$.

\medskip

We make the following assumptions.
\begin{itemize}
\item For every $u\in X$
\begin{equation*}
\langle u, B F(u)\rangle_X \geq 0.
\end{equation*}
This assumption implies that $E_u'(t)\leq 0$.
\end{itemize}

The spectral theorem applied to the bounded nonnegative selfadjoint operator $B$ implies that $B$ is unitarily equivalent to a multiplication: there exist a probability space $(\Omega,\mu)$, a real-valued bounded nonnegative measurable function $b$ defined on $X$, and an isometry $U$ from $L^2(\Omega,\mu)$ into $X$, such that $U^{-1} B U f = b f$ for every $f\in L^2(\Omega,\mu)$.

Now, we define the (nonlinear) mapping $\rho: L^2(\Omega,\mu)\rightarrow L^2(\Omega,\mu)$ by
$$
\rho(f) = U^{-1} F( Uf ).
$$
We make the following assumptions on $\rho$:
\begin{itemize}
\item $\rho(0)=0$ and $f \rho(f) \geq 0$ for every $f\in L^2(\Omega,\mu)$.
\item There exist $c_1>0$ and $c_2>0$ such that, for every $f\in L^\infty(\Omega,\mu)$,
\begin{equation*}
\begin{split}
c_1\, g(\vert f(x)\vert) &\leq \vert\rho(f)(x)\vert \leq c_2\, g^{-1}(\vert f(x)\vert) \quad \textrm{for almost every}\ x\in\Omega\ \textrm{such that}\ \vert f(x)\vert\leq 1,    \\
c_1\, \vert f(x)\vert & \leq \vert\rho(f)(x)\vert \leq c_2 \, \vert f(x)\vert\qquad\quad\ \textrm{for almost every}\ x\in\Omega\ \textrm{such that}\ \vert f(x)\vert\geq 1 ,
\end{split}
\end{equation*}
where $g$ is an increasing odd function of class $C^1$ such that $g(0)=g'(0)=0$, $sg'(s)^2/g(s)\rightarrow 0$ as $s\rightarrow 0$, and such that the function $H$ defined by $H(s)=\sqrt{s}g(\sqrt{s})$, for every $s\in [0,1]$, is strictly convex on $[0,s_0^2]$ for some $s_0\in(0,1]$.

This assumption is issued from \cite{alabau_AMO2005} where the optimal weight method has been developed.

Examples of such functions $g$ are given by
$$
g(s) = s / \ln^{p}(1/s),\quad s^p,\quad e^{-1/s^2},\quad s^p\ln^q(1/s),\quad e^{-\ln^p(1/s)}.
$$
\end{itemize}

We define the function $\widehat H$ on $\R$ by $\widehat H(s)=H(s)$ for every $s\in[0,s_0^2]$ and by $\widehat H(s)=+\infty$ otherwise. 
We define the function $L$ on $[0,+\infty)$ by $L(0)=0$ and, for $r>0$, by
\begin{equation*}
L(r)= \frac{\widehat H^*(r)}{r} = \frac{1}{r} \sup_{s\in \R}\left(rs-\widehat H(s)\right),
\end{equation*}
where $\widehat H^*$ is the convex conjugate of $\widehat H$.
By construction, the function $L:[0,+\infty)\rightarrow[0,s_0^2)$ is continuous and increasing.

We define $\Lambda_H:(0,s_0^2]\rightarrow(0,+\infty)$ by $\Lambda_H(s)=H(s)/sH'(s)$, and we set
\begin{equation*}
\forall s\geq 1/H'(s_0^2)\qquad 
\psi(s)=\frac{1}{H'(s_0^2)}+\int_{1/s}^{H'(s_0^2)}\frac{1}{v^2(1-\Lambda_H((H')^{-1}(v)))}\,dv.
\end{equation*}
The function $\psi:[1/H'(s_0^2),+\infty)\rightarrow[0,+\infty)$ is continuous and increasing.

\medskip

Hereafter, we use the notations $\lesssim$ and $\simeq$ in the estimates, with the following meaning.
Let $\mathcal{S}$ be a set, and let $F$ and $G$ be nonnegative functions defined on $\R\times\Omega\times\mathcal{S}$.
The notation $F\lesssim G$ (equivalently, $G\gtrsim F$) means that there exists a constant $C>0$, only depending on the function $g$ or on the mapping $\rho$, such that $F(t,x,\lambda)\leq C G(t,x,\lambda)$ for all $(t,x,\lambda)\in\R\times\Omega\times\mathcal{S}$. 
The notation $F_1\simeq F_2$ means that $F_1\lesssim F_2$ and $F_1\gtrsim F_2$.

In the sequel, we choose $\mathcal{S} = X$, or equivalently, using the isometry $U$, we choose $\mathcal{S} = L^2(\Omega,\mu)$, so that the notation $\lesssim$ designates an estimate in which the constant does not depend on $u\in X$, or on $f\in L^2(\Omega,\mu)$, but depends only on the mapping $\rho$. We will use these notations to provide estimates on the solutions $u(\cdot)$ of \eqref{main_eq}, meaning that the constants in the estimates do not depend on the solutions.

\begin{theorem}[\cite{AlabauPrivatTrelat}]\label{thm_continuous}
In addition to the above assumptions, we assume that there exist $T>0$ and $C_T>0$ such that 
\begin{equation*}
C_T \Vert \phi(0)\Vert_X^2\leq \int_0^T \Vert B^{1/2} \phi(t) \Vert_X^2 \,dt, 
\end{equation*}
for every solution of $\phi'(t)+A\phi(t)=0$ (observability inequality for the linear conservative equation).

Then, for every $u_0\in X$, there exists a unique solution 
$u(\cdot)\in C^0(0,+\infty;X)\cap C^1(0,+\infty;D(A)')$
of \eqref{main_eq} such that $u(0)=u_0$.\footnote{Here, the solution is understood in the weak sense (see \cite{CazenaveHaraux,EngelNagel}), and $D(A)'$ is the dual of $D(A)$ with respect to the pivot space $X$. If $u_0\in D(A)$, then 
$u(\cdot)\in C^0(0,+\infty;D(A))\cap C^1(0,+\infty;X)$.}
Moreover, the energy of any solution satisfies
\begin{equation*}
E_u(t) \lesssim T \max(\gamma_1,E_u(0)) \, L\left( \frac{1}{\psi^{-1}( \gamma_2 t )} \right) ,
\end{equation*}
for every time $t\geq 0$, with $\gamma_1 \simeq \Vert B\Vert /\gamma_2$ and $\gamma_2 \simeq C_T/ T( T^2\Vert B^{1/2}\Vert^4+1)$.
If moreover 
\begin{equation}\label{condlimsup}
\limsup_{s\searrow 0}\Lambda_H(s)<1,
\end{equation}
then we have the simplified decay rate 
\begin{equation*}
E_{u}(t)\lesssim T \max(\gamma_1, E_u(0)) \, (H')^{-1}\left(\frac{\gamma_3}{t}\right),
\end{equation*}
for every time $t\geq 0$, for some positive constant $\gamma_3\simeq 1$.
\end{theorem}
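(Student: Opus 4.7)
My plan is to follow the strategy that combines the Haraux-type comparison argument from Theorem~\ref{thm_Haraux} with the optimal weight method of Alabau-Boussouira.

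First I would handle well-posedness. Since $A$ is skew-adjoint, $A$ generates a unitary $C_0$-group, and because $F$ is Lipschitz on bounded sets with $\langle u,BF(u)\rangle_X\geq 0$, the nonlinear perturbation $BF$ is locally Lipschitz and monotone-like, so standard semigroup/fixed-point arguments (as in \cite{CazenaveHaraux}) give a unique global weak solution, with energy identity
\[
E_u(0)-E_u(T) = \int_0^T \langle u(t),BF(u(t))\rangle_X\,dt \geq 0 ,
\]
so $E_u$ is nonincreasing. In particular all later estimates can be performed without worrying about blow-up.

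Second, I would transfer the observability from the conservative equation to the nonlinear damped equation, following the mechanism already used in the proof of Theorem~\ref{thm_Haraux}. Let $\phi$ solve $\phi'+A\phi=0$ with $\phi(0)=u(0)$ and set $\theta=u-\phi$, so that $\theta'+A\theta=-BF(u)$ with $\theta(0)=0$. Taking the scalar product with $\theta$, integrating twice (Fubini trick) and using Young's inequality exactly as in Theorem~\ref{thm_Haraux}, I get
\[
\int_0^T\|\theta(t)\|_X^2\,dt \;\lesssim\; T^2\,\|B^{1/2}\|^2\int_0^T \|B^{1/2}F(u(t))\|_X \,\|u(t)\|_X\,dt .
\]
Combining with $\phi=u-\theta$, the observability inequality applied to $\phi$, and the identity $E_u(0)=E_\phi(0)$, I arrive at the key nonlinear observability estimate
\[
E_u(0) \;\lesssim\; \frac{T(T^2\|B^{1/2}\|^4+1)}{C_T}\int_0^T \bigl(\|B^{1/2}u\|_X^2 + \|B^{1/2}F(u)\|_X^2\bigr)\,dt ,
\]
which relates the initial energy to quantities driven by the dissipation.

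Third, and this is where the optimal weight method takes over, I would convert the right-hand side into something controlled by the dissipated energy $E_u(0)-E_u(T)$. Through the spectral diagonalization $B=UbU^{-1}$ I switch to $L^2(\Omega,\mu)$ and use the sandwich bounds on $\rho$. On the set $\{|f|\geq 1\}$ the nonlinearity is equivalent to a linear damping, so $\|B^{1/2}f\|^2$ and $\langle f,b\rho(f)\rangle$ are comparable directly. On the set $\{|f|\leq 1\}$ I use $|\rho(f)|\simeq g(|f|)$ and the strictly convex function $H(s)=\sqrt{s}\,g(\sqrt{s})$: applying Jensen's inequality against the probability measure $b\,d\mu/\!\int b\,d\mu$ to the pair $(|f|^2, g(|f|)|f|)$, the convex duality identity $\widehat H^*(r)/r=L(r)$ gives the weighted bound
\[
\int_0^T \|B^{1/2}u(t)\|_X^2\,dt \;\lesssim\; T\,\max(\gamma_1,E_u(0))\, L^{-1}\!\left(\frac{E_u(0)-E_u(T)}{T\,\max(\gamma_1,E_u(0))}\right) ,
\]
where the weight $\gamma_1\simeq\|B\|/\gamma_2$ is precisely the one needed to absorb the quadratic regime. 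The $\|B^{1/2}F(u)\|^2$ term is handled by the same splitting, using $sg'(s)^2/g(s)\to 0$ to dominate it by $\langle u,BF(u)\rangle$. This is the main obstacle of the proof, as the correct choice of weight is exactly what allows matching $H$ with its Legendre transform.

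Finally, combining the two previous steps yields a discrete nonlinear inequality of the form
\[
E_u((k+1)T) + T\max(\gamma_1,E_u(0))\,\Phi\!\left(\frac{E_u((k+1)T)}{T\max(\gamma_1,E_u(0))}\right) \leq E_u(kT)
\]
with $\Phi$ behaving like the reciprocal of $L^{-1}$, equivalently like $H'^{-1}$ near $0$. A standard ODE comparison argument (interpolating between the discrete times $kT$ by monotonicity of $E_u$) integrates this into
\[
E_u(t)\;\lesssim\; T\max(\gamma_1,E_u(0))\,L\!\left(\frac{1}{\psi^{-1}(\gamma_2 t)}\right) ,
\]
the defining integral in $\psi$ being exactly what appears when solving the ODE $\dot e=-e\Lambda_H(e)$. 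Under the extra condition $\limsup_{s\to 0}\Lambda_H(s)<1$, the integrand in $\psi$ is bounded, so $\psi^{-1}(t)\simeq t$ and $L(1/t)\simeq (H')^{-1}(1/t)$, giving the simplified rate.
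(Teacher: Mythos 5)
Your proposal is correct and follows essentially the same route as the paper: a Haraux-type comparison (Fubini plus Young) transferring the observability inequality to the nonlinear damped dynamics, followed by Alabau-Boussouira's optimal weight method with the convex-duality identity $L(r)=\widehat H^*(r)/r$, and finally iteration of the resulting nonlinear discrete inequality via the function $\psi$. The only cosmetic difference is that you compare $u$ directly with the conservative solution $\phi$ in one step, whereas the paper passes through the intermediate linear damped solution $z$ (Steps 1 and 2); both yield the same constant $k_T\simeq 8T^2\Vert B^{1/2}\Vert^4+2$, and your intermediate display should read $\Vert\theta(t)\Vert_X$ rather than $\Vert u(t)\Vert_X$ before Young's inequality is applied.
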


Note the important fact that this result gives \emph{sharp decay rates} (see Table \ref{table1} for examples).

\begin{table}[h]
\begin{center}
\begin{tabular}{|c|c|c|}
\hline
$g(s)$ & $\Lambda_H(s)$ & decay of $E(t)$ \\
\hline\hline
$s / \ln^{p}(1/s)$, $p>0$ & $\displaystyle \limsup_{x\searrow 0}\Lambda_H(s)=1$ & $e^{-t^{1/(p+1)}}/t^{1/(p+1)}$ \\
\hline
$s^p$ on $[0,s_0^2]$, $p>1$ & $\Lambda_H(s)\equiv \frac{2}{p+1}<1$ & $t^{-2/(p-1)}$ \\
\hline
$e^{-1/s^2}$ & ${\displaystyle \lim_{s\searrow 0}\Lambda_H(s)}=0$ & $1/\ln(t)$ \\
\hline
$s^p\ln^q(1/s)$, $p>1$, $q>0$ & ${\displaystyle \lim_{s\searrow 0}\Lambda_H(s)}=\frac{2}{p+1}<1$ & $t^{-2/(p-1)}\ln^{-2q/(p-1)}(t)$ \\
\hline
$e^{-\ln^p(1/s)}$, $p>1$ & ${\displaystyle \lim_{s\searrow 0}\Lambda_H(s)}=0$ & $e^{-2\ln^{1/p}(t)}$ \\
\hline
\end{tabular}
\end{center}
\caption{Examples}\label{table1}
\end{table}

Theorem \ref{thm_continuous} improves and generalizes to a wide class of equations the main result of \cite{alabau_ammari} in which the authors dealt with locally damped wave equations.
Examples of applications are given in \cite{AlabauPrivatTrelat}, that we mention here without giving the precise framework, assumptions and comments:
\begin{itemize}
\item Schr\"odinger equation with nonlinear damping (nonlinear absorption):
$$
i\partial_t u(t,x) + \triangle u(t,x) + i b(x) u(t,x)\rho(x,\vert u(t,x)\vert)=0.
$$
\item Wave equation with nonlinear damping:
$$
\partial_{tt}u(t,x) - \triangle u(t,x) + b(x) \rho(x,\partial_t u(t,x))=0.
$$
\item Plate equation with nonlinear damping:
$$
\partial_{tt}u(t,x) + \triangle^2 u(t,x) + b(x) \rho(x,\partial_t u(t,x))=0.
$$
\item Transport equations with nonlinear damping:
$$
\partial_t u(t,x) + \mathrm{div}(v(x) u(t,x)) + b(x)\rho(x,u(t,x)) = 0,\qquad x\in\mathbb{T}^n,
$$
with $\mathrm{div}(v)=0$.
\item Dissipative equations with nonlocal terms:
$$
\partial_t f + v.\nabla_x f = \rho(f) ,
$$
with kernels $\rho$ satisfying the sign assumption $f\rho(f)\geq 0$.
\end{itemize}

\begin{proof}
It is interesting to quickly give the main steps of the proof.
\begin{itemize}
\item \textbf{Step 1}. Comparison of the nonlinear equation with the linear damped model:\\
Prove that the solutions of
$$u'(t)+Au(t)+BF(u(t))=0,$$
$$z'(t)+Az(t)+Bz(t)=0,\qquad z(0)=u(0),$$
satisfy
$$
\int_0^T \Vert B^{1/2} z(t)\Vert_X^2\, dt \leq 2 \int_0^T \left( \Vert B^{1/2} u(t)\Vert_X^2 + \Vert B^{1/2} F(u(t))\Vert_X^2 \right) dt.
$$
\item \textbf{Step 2}. Comparison of the linear damped equation with the conservative linear equation:\\
Prove that the solutions of
$$z'(t)+Az(t)+Bz(t)=0,\qquad z(0)=u(0),$$
$$\phi'(t)+A\phi(t)=0,\qquad  \phi(0)= u(0),$$
satisfy
$$
\int_0^T \Vert B^{1/2} \phi(t)\Vert_X^2\, dt \leq k_T \int_0^T \Vert B^{1/2} z(t)\Vert_X^2 dt,
$$
with $k_T = 8T^2\Vert B^{1/2}\Vert^4 +2$.
\item \textbf{Step 3}. Following the optimal weight method introduced by F. Alabau (see, e.g., \cite{alabau_AMO2005}), we set $w(s) = L^{-1}\left( \frac{s}{\beta} \right)$ with $\beta$ appropriately chosen. Nonlinear energy estimate:\\
Prove that
\begin{multline*}
\int_0^T w(E_\phi(0)) \left( \Vert B^{1/2}u(t)\Vert_X^2 + \Vert B^{1/2}F(u(t))\Vert_X^2 \right) dt \\
\lesssim  T \Vert B\Vert  H^*(w(E_\phi(0)))  + \left( w(E_{\phi}(0)) + 1 \right) \int_0^T \langle Bu(t),F(u(t)) \rangle_X \, dt   .
\end{multline*}
\item \textbf{Step 4}. End of the proof:\\
Using the results of the three steps above, we have
\begin{equation*} 
\begin{split}
&
T \Vert B\Vert  H^*(w(E_\phi(0)))  + \left( w(E_{\phi}(0)) + 1 \right) \int_0^T \langle Bu(t),F(u(t)) \rangle_X \, dt  \\
& \gtrsim 
\int_0^T w(E_\phi(0)) \left( \Vert B^{1/2}u(t)\Vert_X^2 + \Vert B^{1/2}F(u(t))\Vert_X^2 \right) dt \qquad\textrm{(Step 3)} \\
& \gtrsim 
\int_0^T w(E_\phi(0)) \Vert B^{1/2}z(t)\Vert_X^2 \, dt  \qquad\qquad\qquad\qquad\qquad\qquad\textrm{(Step 1)} \\
& \gtrsim 
\int_0^T w(E_\phi(0)) \Vert B^{1/2}\phi(t)\Vert_X^2 \, dt   \qquad\qquad\qquad\qquad\qquad\qquad\textrm{(Step 2)} \\
& \gtrsim \mathrm{Cst}\, w(E_{\phi}(0))  E_{\phi}(0) \qquad \qquad\qquad\qquad\qquad\qquad\qquad\qquad\textrm{(uniform observability inequality)}
\end{split}
\end{equation*}
from which we infer that
$$
E_u(T) \leq E_u(0) \left( 1 - \rho_T L^{-1}\left( \frac{E_u(0)}{\beta} \right) \right) ,
$$
and then the exponential decrease is finally established.
\end{itemize}
\end{proof}

\subsection{Space semi-discretizations}
In this section, we define a general space semi-discrete version of \eqref{main_eq}, with the objective of obtaining a theorem similar to Theorem \ref{thm_continuous}, but in this semi-discrete setting, with estimates that are uniform with respect to the mesh parameter.
As we are going to see, uniformity is not true in general, and in order to recover it we add an appropriate extra numerical viscosity term in the numerical scheme.

\subsubsection{Space semi-discretization setting}
We denote by $\triangle x>0$ the space discretization parameter (typically, step size of the mesh), with $0<\triangle x<{\triangle x}_0$, for some fixed ${\triangle x}_0>0$.
We follow \cite{LabbeTrelat,LasieckaTriggiani1} for the setting. Let $(X_{\triangle x})_{0<\triangle x<{\triangle x}_0}$ be a family of finite-dimensional vector spaces ($X_{\triangle x}\sim\R^{N(\triangle x)}$ with $N(\triangle x)\in\N$). 

We use the notations $\lesssim$ and $\simeq$ as before, also meaning that the involved constants are uniform with respect to $\triangle x$.

Let $\beta\in\rho(A)$ (resolvent of $A$). Following \cite{EngelNagel}, we define $X_{1/2} = (\beta\mathrm{id}_X-A)^{-1/2}(X)$, endowed with the norm $\Vert u\Vert_{X_{1/2}} = \Vert (\beta\mathrm{id}_X-A)^{1/2}u\Vert_X$ (for instance, if $A^{1/2}$ is well defined, then $X_{1/2}=D(A^{1/2})$), and we define $X_{-1/2}=X_{1/2}'$ (dual with respect to $X$).

The general semi-discretization setting is the following.
We assume that, for every $\triangle x\in(0,{\triangle x}_0)$, there exist linear mappings $P_{\triangle x}:X_{-1/2}\rightarrow X_{\triangle x}$ and $\widetilde{P}_{\triangle x}:X_{\triangle x}\rightarrow X_{1/2}$ such that $P_{\triangle x}\widetilde{P}_{\triangle x}=\mathrm{id}_{X_{\triangle x}}$, and such that $P_{\triangle x}=\widetilde{P}_{\triangle x}^*$.
We assume that the scheme is convergent, that is,
$\Vert (I-\widetilde{P}_{\triangle x}P_{\triangle x})u \Vert_X \rightarrow 0$ as $\triangle x\rightarrow 0$, for every $u\in X$.
Here, we have implicitly used the canonical injections $D(A) \hookrightarrow X_{1/2} \hookrightarrow X \hookrightarrow X_{-1/2}$ (see \cite{EngelNagel}).

For every $\triangle x\in(0,{\triangle x}_0)$:
\begin{itemize}
\item $X_{\triangle x}$ is endowed with the Euclidean norm $\Vert u_{\triangle x}\Vert_{\triangle x} = \Vert \widetilde{P}_{\triangle x} u_{\triangle x}\Vert_X$, for $u_{\triangle x}\in X_{\triangle x}$. The corresponding scalar product is denoted by $\langle\cdot,\cdot\rangle_{\triangle x}$.\footnote{Note that $\Vert \widetilde{P}_{\triangle x}\Vert_{L(X_{\triangle x},X)}=1$ and that, by the Uniform Boundedness Principle, $\Vert P_{\triangle x}\Vert_{L(X,X_{\triangle x})} \lesssim 1$.}
\item We set\footnote{We implicitly use the canonical extension of the operator $A:X_{1/2}\rightarrow X_{-1/2}$.}
$$
A_{\triangle x}=P_{\triangle x}A\widetilde{P}_{\triangle x},\qquad
B_{\triangle x}=P_{\triangle x}B\widetilde{P}_{\triangle x} .
$$
Since $P_{\triangle x}=\widetilde{P}_{\triangle x}^*$, $A_{\triangle x}$ is skew-symmetric and $B_{\triangle x}$ is symmetric nonnegative
\item We define $F_{\triangle x}:X_{\triangle x}\rightarrow X_{\triangle x}$ by
$$
\forall u_{\triangle x}\in X_{\triangle x}\qquad F_{\triangle x}(u_{\triangle x}) = P_{\triangle x} F(\widetilde{P}_{\triangle x} u_{\triangle x})
$$
\end{itemize}
Note that $B_{\triangle x}$ is uniformly bounded with respect to $\triangle x$, and $F_{\triangle x}$ is  Lipschitz continuous on bounded subsets of $X_{\triangle x}$, uniformly with respect to $\triangle x$.

\medskip

Now, a priori we consider the space semi-discrete approximation of \eqref{main_eq} given by
\begin{equation}\label{waveqSpaceDiscrete_withoutvisc}
u_{\triangle x}'(t)+A_{\triangle x}u_{\triangle x}(t)+B_{\triangle x}F_{\triangle x}(u_{\triangle x}(t))=0,
\end{equation}
and we wonder whether we are able or not to ensure a uniform decay rate of solutions of \eqref{waveqSpaceDiscrete_withoutvisc}.

It happens that the answer is \textbf{NO} in general.
Let us give hereafter an example in the linear case $(F=\mathrm{id}_X)$.

\subsubsection{Finite-difference space semi-discretization of the 1D damped wave equation}
Let us consider the 1D damped Dirichlet wave equation
\begin{equation}\label{1Ddampedwave}
\begin{split}
& y_{tt}-y_{xx}+ay_t=0,\quad 0<x<1,\ t>0,\\
& y(t,0)=y(t,1)=0,
\end{split}
\end{equation}
with $a\geq 0$ measurable and bounded, satisfying $a(x)\geq\alpha>0$ on $\omega\subset(0,1)$ of measure $0<\vert\omega\vert<1$.
This equation fits in our framework with
$$
u = \begin{pmatrix} y \\ y_t \end{pmatrix},\qquad
A = \begin{pmatrix}
0 & \mathrm{id} \\
-\triangle & 0
\end{pmatrix},\qquad
B = a\,\mathrm{id},\qquad
F = \mathrm{id}.
$$
The energy along any solution is
$$
E(t) = \frac{1}{2}\int_0^1 (y_t(t,x)^2+y_x(t,x)^2) \, dx,
$$
and we have
$$
E'(t)=-\int_0^1 a(x) y_t(t,x)^2 \, dx\leq 0.
$$
It is known that $E(t)$ has an exponential decrease. Indeed, by Theorem \ref{thm_Haraux}, the exponential decrease is equivalent to the observability inequality
\begin{equation}\label{obs1139}
\int_0^T\int_\omega \phi_t(t,x)^2 \, dx\, dt \geq C\int_0^1 (\phi_t(0,x)^2+\phi(0,x)^2) \, dx
\end{equation}
for every solution $\phi$ of the conservative equation 
\begin{equation}\label{wave1141}
\phi_{tt}-\phi_{xx}=0,\qquad \phi(t,0)=\phi(t,1)=0,
\end{equation}
and we have the following result.

\begin{proposition}
The observability inequality \eqref{obs1139} holds, for every $T\geq 2$.
\end{proposition}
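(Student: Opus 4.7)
My approach will be a direct Fourier decomposition. I will expand any solution $\phi$ of \eqref{wave1141} as
\begin{equation*}
\phi(t,x)=\sum_{k\geq 1}\sin(k\pi x)\bigl(a_k\cos(k\pi t)+b_k\sin(k\pi t)\bigr),
\end{equation*}
so that $\phi_t(t,x)=\sum_k k\pi\sin(k\pi x)\bigl(-a_k\sin(k\pi t)+b_k\cos(k\pi t)\bigr)$, and, by Parseval on $[0,1]$, the initial energy equals $\tfrac14\sum_k(k\pi)^2(a_k^2+b_k^2)$. Since the integrand is nonnegative, I will reduce to the case $T=2$: for $T>2$ one simply has $\int_0^T\geq\int_0^2$, and $T=2$ is exactly the threshold at which the temporal trigonometric system is orthogonal.

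The second step exploits this orthogonality. Thanks to the relations $\int_0^2\sin(j\pi t)\sin(k\pi t)\,dt=\int_0^2\cos(j\pi t)\cos(k\pi t)\,dt=\delta_{jk}$ and $\int_0^2\sin(j\pi t)\cos(k\pi t)\,dt=0$, the cross-terms vanish after integrating in $t$, yielding the clean identity
\begin{equation*}
\int_0^2\phi_t(t,x)^2\,dt=\sum_{k\geq 1}(k\pi)^2(a_k^2+b_k^2)\,\sin^2(k\pi x).
\end{equation*}
Integrating against $\chi_\omega$ and setting $I_k:=\int_\omega\sin^2(k\pi x)\,dx$, the desired inequality reduces to the weighted estimate $\sum_k(k\pi)^2(a_k^2+b_k^2)I_k\geq C\sum_k(k\pi)^2(a_k^2+b_k^2)$, which would follow at once from the uniform lower bound $\inf_{k\geq 1}I_k>0$.

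The core step is therefore to prove this uniform bound, and it is the only place where a genuine argument is needed. I will write $\sin^2(k\pi x)=\tfrac12(1-\cos(2k\pi x))$, so that $I_k=\tfrac12|\omega|-\tfrac12\int_\omega\cos(2k\pi x)\,dx$; the Riemann--Lebesgue lemma then gives $I_k\to|\omega|/2$ as $k\to\infty$, and in particular $I_k\geq|\omega|/4$ for $k$ beyond some $k_0$. For the finitely many remaining indices $1\leq k<k_0$, positivity is clear since $\sin(k\pi\cdot)$ has only finitely many zeros in $(0,1)$ while $\omega$ has positive measure. Taking the minimum gives $c_0:=\inf_k I_k>0$ and hence $\int_0^T\int_\omega\phi_t^2\geq 4c_0\,E_\phi(0)$, which is the observability inequality (the stated version with $\phi(0,x)^2$ in place of $\phi_x(0,x)^2$ then follows \emph{a fortiori} by Poincar\'e).

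I do not expect any substantial obstacle: the proof is essentially a spectral identity combined with Riemann--Lebesgue. What is worth emphasising is that the single nontrivial ingredient --- the uniform lower bound on $I_k$ --- is precisely what breaks down for naive space discretizations, since spurious high-frequency modes on a uniform mesh propagate with vanishing group velocity and make the discrete analogue of $I_k$ degenerate. This is exactly the phenomenon motivating the numerical viscosity corrections introduced in the sequel, so the simplicity of the argument here is, in a sense, the whole point.
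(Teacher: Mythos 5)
Your proof is correct and follows the same skeleton as the paper's: expand in the Dirichlet sine basis, reduce to $T=2$ by positivity of the integrand, use orthogonality of the temporal trigonometric system on $[0,2]$ to kill the cross terms, and reduce observability to the uniform lower bound $\inf_{k\geq 1}\int_\omega\sin^2(k\pi x)\,dx>0$. The one genuine difference is how that bound is obtained. You argue softly: Riemann--Lebesgue gives $\int_\omega\sin^2(k\pi x)\,dx\to|\omega|/2$, and the finitely many low frequencies are handled by the trivial positivity of each individual integral, so your constant is not explicit in $\omega$. The paper notes exactly this weak-convergence shortcut and then opts for more: it proves the explicit bound $\int_\omega\sin^2(j\pi x)\,dx\geq\frac{|\omega|-\sin|\omega|}{2}$, valid for every $j$ and every measurable $\omega$, by minimizing $\omega'\mapsto\int_{\omega'}\sin^2(j\pi x)\,dx$ over all sets of prescribed measure; the minimizer is the sublevel set of $\sin^2(j\pi x)$ concentrated around its zeros and the minimal value is independent of $j$. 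The paper's route buys an observability constant depending only on $|\omega|$; yours buys brevity and portability to settings where no explicit rearrangement is available. Your final passage from the energy $\int_0^1(\phi_t(0)^2+\phi_x(0)^2)\,dx$ to the stated right-hand side $\int_0^1(\phi_t(0)^2+\phi(0)^2)\,dx$ via Poincar\'e is fine; the paper sidesteps it by normalizing the expansion (putting $b_k/(k\pi)$ on the temporal sine) so that the stated right-hand side is directly $\tfrac12\sum_k(a_k^2+b_k^2)$.
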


\begin{proof}
The proof is elementary and can be done by a simple spectral expansion. Any solution $\phi$ of \eqref{wave1141} can be expanded as
$$
\phi(t,x) = \sum_{k=1}^\infty\left( a_k\cos(k\pi t) + \frac{b_k}{k\pi}\sin(k\pi t) \right)\sin(k\pi x),
$$
with
$$
\phi(0,x) = \sum_{k=1}^\infty a_k\sin(k\pi x),\ \
\textrm{and}\ \
\phi_t(0,x) = \sum_{k=1}^\infty b_k \sin(k\pi x).
$$
Then,
$$
\int_0^1 (\phi_t(0,x)^2+\phi(0,x)^2) \, dx = \frac{1}{2}\sum_{k=1}^\infty (a_k^2+b_k^2).
$$
Now, for every $T\geq 2$, we have
$$
\int_0^T\int_\omega \phi_t(t,x)^2 \, dx\, dt \geq \int_0^2\int_\omega \phi_t(t,x)^2 \, dx\, dt, 
$$
and
\begin{equation*}
\begin{split}
\int_0^2\int_\omega \phi_t(t,x)^2 \, dx\, dt &= \sum_{j,k=1}^\infty \int_0^2 (a_j\cos(j\pi t)+b_j\sin(j\pi t))(a_k\cos(k\pi t)+b_k\sin(k\pi t)) \, dt \\
& \qquad\qquad\qquad\qquad \times \int_\omega \sin(j\pi x)\sin(k\pi x) \, dx \\
&= \sum_{j=1}^\infty (a_j^2+b_j^2) \int_\omega \sin^2(j\pi x) \, dx
\end{split}
\end{equation*}
At this step, we could simply conclude by using the fact that the sequence of functions $\sin^2(j\pi x)$ converges weakly to $1/2$, and then infer an estimate from below. But it is interesting to note that we can be more precise by showing the following simple result, valid for any measurable subset $\omega$ of $[0,1]$.

\begin{lemma}
For every measurable subset $\omega\subset[0,1]$, there holds
$$
\forall j\in\N^*\qquad 
\int_\omega \sin^2(j\pi x) \, dx \geq \frac{\vert\omega\vert-\sin\vert\omega\vert}{2},
$$
where $\vert\omega\vert$ is the Lebesgue measure of $\omega$.
\end{lemma}

With this lemma, the observability property follows and we are done.
It remains to prove the lemma.

For a \emph{fixed} integer $j$, let us consider the problem of minimizing the functional
$$ 
K_j(\omega')=\int_{\omega'} \sin^2(j\pi x)\,dx
$$
over the set of all measurable subsets $\omega'\subset[0,1]$ of Lebesgue measure $\vert\omega'\vert=\vert\omega\vert$. Clearly there exists a unique (up to zero measure subsets) optimal set, characterized as a level set of the function $x\mapsto\sin^2(j\pi x)$, which is the set
$$
\omega_j^{\textnormal{inf}}=\left(0,\frac{\vert\omega\vert}{2j}\right)\ \bigcup\ \bigcup_{k=1}^{j-1}\ \left(\frac{k}{j}-\frac{\vert\omega\vert}{2j},\frac{k}{j}+\frac{\vert\omega\vert}{2j}\right)\ \bigcup\ \left(1-\frac{\vert\omega\vert}{2j},\pi\right),
$$
and there holds
$$
\int_{\omega_j^{\textnormal{inf}}}\sin^2(j\pi x)\,dx  =  2j\int_0^{\vert\omega\vert/2j}\sin^2{jx}\,dx
  =  2\int_0^{\vert\omega\vert/2}\sin^2u\, du
  =  \frac{1}{2}(\vert\omega\vert -\sin (\vert\omega\vert)).
$$
Since the quantity is independent of $j$, the lemma follows, and thus the proposition as well.
\end{proof}

Hence, at this step, we know that the energy is exponentially decreasing for the continuous damped wave equation.

Let us now consider the usual finite-difference space semi-discretization model:
\begin{equation*}
\begin{split}
& \ddot y_j - \frac{y_{j+1}-2y_j+y_{j-1}}{(\triangle x)^2} + a_j \dot y_j = 0,\qquad j=1,\ldots,N,\\
& y_0(t)=y_{N+1}(t)=0.
\end{split}
\end{equation*}
The energy of any solution is given by
$$
E_{\triangle x}(t) = \frac{\triangle x}{2} \sum_{j=0}^N \left( (y_j'(t))^2+ \left( \frac{y_{j+1}(t)-y_j(t)}{\triangle x}\right)^2 \right) ,
$$
and we have
$$
E_{\triangle x}'(t) = -\triangle x\,\sum_{j=1}^N a_j (y_j'(t))^2 \leq 0.
$$
We claim that $E_{\triangle x}(t)$ does not decrease uniformly exponentially.

Indeed, otherwise there would hold
$$
E_{\triangle x}(0) \leq C \triangle x\,\sum_{j=1}^N  \int_0^T  a_j (\dot \phi_j(t))^2 \, dt
$$
for every solution of the conservative system
\begin{equation}\label{conserv1155}
\begin{split}
& \ddot \phi_j - \frac{\phi_{j+1}-2\phi_j+\phi_{j-1}}{(\triangle x)^2} = 0,\qquad j=1,\ldots,N,\\
& \phi_0(t)=\phi_{N+1}(t)=0,
\end{split}
\end{equation}
for some time $T>0$ and some \emph{uniform} $C>0$.
But this is known to be wrong.
To be more precise, it is true that, for every fixed value of $\triangle x$, there exists a constant $C_{\triangle x}$, depending on $\triangle x$, such that any solution of \eqref{conserv1155} satisfies
$$
E_{\triangle x}(0) \leq C_{\triangle x}  \triangle x\, \sum_{j=1}^N  \int_0^T a_j (\dot \phi_j(t))^2 \,dt .
$$
This is indeed the usual observability inequality in finite dimension, but with a constant $C_{\triangle x}$ depending on $\triangle x$ (by the way, this observability inequality holds true for any $T>0$, but we are of course interested in taking $T\geq 2$ in a hope to be able to pass to the limit).
Now, the precise negative result is the following.

\begin{proposition}\cite{MaricaZuazua}
We have
$$C_{\triangle x}\underset{\triangle x\rightarrow 0}{\longrightarrow}+\infty.$$
\end{proposition}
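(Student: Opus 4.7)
The plan is to exhibit, for $\triangle x\to 0$, a sequence of solutions of the conservative scheme \eqref{conserv1155} whose initial energy is of order $1$ but for which the observed quantity $\triangle x\sum_{j=1}^N a_j\int_0^T(\dot\phi_j(t))^2\,dt$ tends to $0$; this forces $C_{\triangle x}\to +\infty$.

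I would start by diagonalizing the discrete Laplacian: its eigenpairs are
$$\lambda_k^{\triangle x}=\frac{4}{(\triangle x)^2}\sin^2\!\left(\frac{k\pi\triangle x}{2}\right),\qquad e_k^j=\sin(k\pi j\triangle x),\quad k=1,\dots,N,$$
so every solution of \eqref{conserv1155} is a Fourier sum in the modes $e_k$ with frequencies $\omega_k^{\triangle x}=\sqrt{\lambda_k^{\triangle x}}=\tfrac{2}{\triangle x}\sin(k\pi\triangle x/2)$. The key dispersive fact is that the associated group velocity
$$v_g(k)=\frac{\mathrm{d}\omega_k^{\triangle x}}{\mathrm{d}k}=\pi\cos\!\left(\frac{k\pi\triangle x}{2}\right)$$
degenerates to $0$ as $k\triangle x\to 1$, i.e.\ near the top of the discrete spectrum, whereas every frequency in the continuous wave equation propagates at unit speed. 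This degeneracy is precisely the mechanism that destroys uniform observability: high-frequency discrete wave packets can be made almost stationary.

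Next, I would exploit this by constructing a Gaussian wave packet of the form
$$\phi_j^{\triangle x}(t)=\sum_{k} c_k^{\triangle x}\,e^{i\omega_k^{\triangle x}t}\,e_k^j,\qquad c_k^{\triangle x}= \alpha_{\triangle x}\exp\!\left(-\frac{(k-k_0)^2}{2\sigma^2}\right)e^{-ik\pi j_0\triangle x},$$
where $k_0=k_0(\triangle x)$ is chosen so that $v_g(k_0)\to 0$ as $\triangle x\to 0$, $j_0\triangle x=x_0$ is fixed inside $(0,1)\setminus\overline{\omega}$, the width $\sigma$ is tuned so that the packet is spatially concentrated around $x_0$ on a scale much smaller than $\mathrm{dist}(x_0,\omega)$, and $\alpha_{\triangle x}$ normalizes so that $E_{\triangle x}(0)\simeq 1$. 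A stationary-phase / group-velocity analysis (Taylor expand $\omega_k^{\triangle x}$ around $k_0$) shows that, up to an $o(1)$ remainder, the packet's center of mass at time $t\in[0,T]$ sits at $x_0+t\,v_g(k_0)/\pi$; choosing $v_g(k_0)$ small enough compared with $T$ and $\mathrm{dist}(x_0,\omega)$ ensures that the packet stays inside $(0,1)\setminus\overline{\omega}$ throughout $[0,T]$.

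Consequently $\int_0^T \triangle x \sum_{j=1}^N a_j (\dot\phi_j^{\triangle x}(t))^2\,dt \to 0$ while $E_{\triangle x}(0)\simeq 1$, and the semi-discrete observability inequality forces $C_{\triangle x}\to +\infty$. The main technical obstacle is making the wave-packet localization rigorous in the discrete framework, in particular controlling the Gaussian tails in $k$-space (via the second-order Taylor expansion of $\omega_k^{\triangle x}$ around $k_0$ and a careful balance between $\sigma$, $\triangle x$, and $T$), so that the true support of the packet genuinely misses $\omega$ up to time $T$ and the $L^2(\omega\times(0,T))$ mass of $\dot\phi^{\triangle x}$ indeed vanishes in the limit.
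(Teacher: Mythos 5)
Your proposal follows essentially the same route as the paper, which itself only sketches the argument of \cite{MaricaZuazua}: construct a high-frequency wave packet localized away from $\omega$ whose group velocity $\pi\cos(k_0\pi\triangle x/2)$ is made to vanish as $\triangle x\rightarrow 0$, so that it never reaches the observation region within time $T$ while carrying unit energy. Your version simply makes explicit the spectral data and the Gaussian-packet construction that the paper leaves implicit.
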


In other words, the observability constant is not uniform and observability is lost when we pass to the limit.
The proof of this proposition combines the following facts:
\begin{itemize}
\item Using \emph{gaussian beams} it can be shown that along every bicharacteristic ray there exists a solution of the wave equation whose energy is \emph{localized along this ray}.
\item The \emph{velocity of highfrequency wave packets} for the discrete model \emph{tends to $0$} as $\triangle x$ tends to $0$.
\end{itemize}
Then, for every $T>0$, for $\triangle x>0$ small enough there exist initial data whose corresponding solution is concentrated along a ray that does not reach the observed region $\omega$ within time $T$. The result follows.

\begin{remark}
This proof of nonuniformity is not easy in the case of an internal observation, and relies on microlocal considerations.
It can be noticed that nonuniformity is much easier to prove, in an elementary way, in the case of a boundary observation. We reproduce here the main steps of \cite{InfanteZuazua1999} with  easy-to-do computations:
\begin{enumerate}
\item The eigenelements of the (finite-difference) matrix $\triangle_{\triangle x}$ are
$$ 
e_{\triangle x,j}= \begin{pmatrix}
\vdots \\ 
\sin(j\pi k\triangle x)\\
\vdots 
\end{pmatrix},
\qquad
\lambda_{\triangle x,j} = -\frac{4}{(\triangle x)^2}\sin^2\frac{j\pi \triangle x}{2},\qquad
j=1,\ldots,N .
$$
\item If $\triangle x\rightarrow 0$ then 
$$
\sqrt{\vert \lambda_{\triangle x,n}\vert} -  \sqrt{\vert \lambda_{\triangle x,n-1}\vert} \sim \pi^2 \triangle x .
$$
\item The vector-valued function of time
$$
\phi_{\triangle x}(t) =   \frac{1}{\sqrt{\vert\lambda_{\triangle x,N}\vert}}\left(
\mathrm{e}^{it\sqrt{\vert\lambda_{\triangle x,N}\vert}}e_{\triangle x,N}
- \mathrm{e}^{it\sqrt{\vert\lambda_{\triangle x,N-1}\vert}}e_{\triangle x,N-1}  \right)
$$
is solution of $\ddot \phi_{\triangle x}(t)=\triangle_{\triangle x}\phi_{\triangle x}(t)$, and if $\triangle x$ tends to $0$ then
\begin{itemize}
\item the term $\displaystyle \triangle x\,\sum_{j=0}^N \left( \left\vert \frac{\phi_{j+1}(0)-\phi_j(0)}{\triangle x}\right\vert^2 + \vert\dot\phi_j(0)\vert^2 \right)$ has the order of $1$,
\item the term $\displaystyle  \triangle x \int_0^T \left(\frac{\dot \phi_N(t)}{\triangle x}\right)^2  dt$ has the order of $\triangle x$.
\end{itemize}
\item Therefore $C_{\triangle x}\rightarrow +\infty$ as $\triangle x\rightarrow 0$.
\end{enumerate}
\end{remark}

\paragraph{Remedy for this example.}
A remedy to this lack of uniformity has been proposed in \cite{tebou_zuazua}, which consists of adding in the numerical scheme a suitable extra numerical viscosity term, as follows:
\begin{equation*}
\begin{split}
& \ddot y_j - \frac{y_{j+1}-2y_j+y_{j-1}}{(\triangle x)^2}  + a_j \dot y_j   - (\triangle x)^2 \left( \frac{\dot y_{j+1}-2\dot y_j+\dot y_{j-1}}{(\triangle x)^2} \right)= 0,\qquad j=1,\ldots,N\\
& y_0(t)=y_{N+1}(t)=0 .
\end{split}
\end{equation*}
With matrix notations, this can be written as
\begin{equation*}\label{zuazuavisc}
\ddot y_{\triangle x} - \triangle_{\triangle x} y_{\triangle x}  + a_{\triangle x} \dot y_{\triangle x} - (\triangle x)^2 \triangle_{\triangle x}\dot y_{\triangle x} = 0 .
\end{equation*}
Now the energy decreases according to
$$
\dot E_{\triangle x} = - \triangle x\,\sum_{j=1}^N a_j \dot y_j^2  -(\triangle x)^3 \sum_{j=0}^N \left( \frac{\dot y_{j+1}-\dot y_j}{\triangle x} \right)^2 ,
$$
and we have the following result.

\begin{proposition}[\cite{tebou_zuazua}]
There exist $C>0$ and $\delta>0$ such that, for every $\triangle x$,
$$
E_{\triangle x}(t) \leq C\exp (-\delta t) E_{\triangle x}(0) .
$$
\end{proposition}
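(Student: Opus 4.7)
The plan is to reduce the uniform exponential decay to a uniform observability inequality for the conservative scheme, as in Theorem \ref{thm_Haraux}, and then to prove this observability by a spectral filtering argument in which the physical damping $a$ handles the low frequencies while the added viscosity handles the high ones.

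\emph{Step 1: reduction to uniform observability.} Writing $U=(y_{\triangle x},\dot y_{\triangle x})^\top$, the viscous scheme becomes $\dot U+\mathcal{A}_{\triangle x}U+\mathcal{B}_{\triangle x}U=0$ with
\[
\mathcal{A}_{\triangle x}=\begin{pmatrix} 0 & -\mathrm{id}\\ -\triangle_{\triangle x} & 0\end{pmatrix},\qquad \mathcal{B}_{\triangle x}=\begin{pmatrix} 0 & 0\\ 0 & a_{\triangle x}-(\triangle x)^2\triangle_{\triangle x}\end{pmatrix}.
\]
In the energy inner product, $\mathcal{A}_{\triangle x}$ is skew-symmetric, $\mathcal{B}_{\triangle x}$ is symmetric nonnegative, and crucially $\|\mathcal{B}_{\triangle x}\|\lesssim 1$ since $(\triangle x)^2\|\triangle_{\triangle x}\|\lesssim 1$. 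Replaying the proof of Theorem \ref{thm_Haraux} verbatim (every constant there depends only on $T$ and $\|\mathcal{B}^{1/2}\|$, both uniform in $\triangle x$) shows that the claimed decay is equivalent to the uniform observability inequality
\[
E_{\triangle x}(0)\lesssim \int_0^T\!\Bigl(\triangle x\sum_{j=1}^N a_j\dot\phi_j(t)^2 + (\triangle x)^3\!\sum_{j=0}^N\Bigl|\tfrac{\dot\phi_{j+1}-\dot\phi_j}{\triangle x}\Bigr|^2\Bigr)dt
\]
for every solution $\phi_{\triangle x}$ of the conservative scheme \eqref{conserv1155}.

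\emph{Step 2: filtering and the high-frequency estimate.} Let $(e_{\triangle x,k},\lambda_{\triangle x,k})$ denote the eigenpairs of $-\triangle_{\triangle x}$. Fix a threshold $\eta\in(0,4)$ and split $\phi_{\triangle x}=\phi^L+\phi^H$ according to whether $(\triangle x)^2\lambda_{\triangle x,k}\leq\eta$ or not. Both components solve the conservative scheme and are orthogonal in the energy norm \emph{and} under $-\triangle_{\triangle x}$. Parseval yields
\[
(\triangle x)^3\sum_j\Bigl|\tfrac{\dot\phi^H_{j+1}-\dot\phi^H_j}{\triangle x}\Bigr|^2=(\triangle x)^2\langle -\triangle_{\triangle x}\dot\phi^H,\dot\phi^H\rangle_{\triangle x}\geq \eta\,\|\dot\phi^H\|_{\triangle x}^2,
\]
and the elementary mode-by-mode equipartition $\int_0^T\|\dot\phi^H\|_{\triangle x}^2\,dt\simeq T\,E_{\phi^H}(0)$ (valid for $T\geq 2$) gives $\int_0^T(\text{viscosity term})\gtrsim E_{\phi^H}(0)$. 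Moreover, by the spectral orthogonality, the viscosity observation of the full $\phi_{\triangle x}$ dominates that of $\phi^H$ alone.

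\emph{Step 3: low frequencies via a uniform Ingham inequality.} On the low range, $(\triangle x)^2\lambda_{\triangle x,k}\leq\eta$ forces $k\triangle x\leq(2/\pi)\arcsin(\sqrt{\eta}/2)$, so the discrete frequencies $\omega_{\triangle x,k}=\sqrt{\lambda_{\triangle x,k}}$ stay close to $k\pi$ with a uniform gap $\omega_{\triangle x,k}-\omega_{\triangle x,k-1}\geq\pi-O(\eta)>0$. A discrete nonharmonic Fourier (Ingham) argument, combined with the lower bound $\int_\omega\sin^2(k\pi x)\,dx\geq(|\omega|-\sin|\omega|)/2$ proved in the lemma above (which transfers to the discrete low modes because they are $C^0$-close to $\sin(k\pi x)$ uniformly in $\triangle x$), produces
\[
\int_0^T\triangle x\sum_{j=1}^N a_j\,\dot\phi^L_j(t)^2\,dt\gtrsim E_{\phi^L}(0).
\]
The cross terms $\triangle x\sum_j a_j\dot\phi^L_j\dot\phi^H_j$ arising when one expands the observation of the full $\phi_{\triangle x}$ are absorbed by Young's inequality: the $(\dot\phi^L)^2$ part is absorbed on the left, and the $(\dot\phi^H)^2$ part is controlled by $\|a\|_\infty \|\dot\phi^H\|_{\triangle x}^2 \leq \|a\|_\infty \eta^{-1}(\text{viscosity term})$.

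\emph{Main obstacle.} The whole argument is spectral in spirit, and the serious technical step is Step 3: one must establish a \emph{uniform} (in $\triangle x$) Ingham inequality for the filtered discrete eigenmodes with an internal observation. The gap estimate and the closeness of low discrete eigenfunctions to continuous sines make this possible in the present 1D finite-difference setting, but this is exactly the point at which the choice of the viscosity exponent matters: a weaker viscosity would leave a non-empty intermediate band of modes that are neither damped uniformly by $a$ (because the gap degenerates) nor by the viscosity (because $(\triangle x)^2\lambda$ is too small). Once Step 3 is in place, summing with Step 2 yields the uniform observability of Step 1 with constants depending only on $T$, $\alpha$, $|\omega|$, and $\eta$, and the uniform exponential decay $E_{\triangle x}(t)\leq Ce^{-\delta t}E_{\triangle x}(0)$ follows.
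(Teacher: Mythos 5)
Your proposal is correct in outline, and its first step coincides with the paper's: both reduce the uniform exponential decay to the uniform observability inequality (damping plus viscosity observation) for the conservative scheme \eqref{conserv1155}, via the Haraux-type equivalence of Theorem \ref{thm_Haraux} whose constants depend only on $T$ and $\Vert \mathcal{B}^{1/2}\Vert$, which is uniform here because $(\triangle x)^2\Vert\triangle_{\triangle x}\Vert\leq 4$. Where you genuinely diverge is in the proof of that observability inequality: the paper, following \cite{tebou_zuazua}, obtains it by ``a meticulous application of a discrete version of the multiplier method,'' whereas you use a spectral splitting at the threshold $(\triangle x)^2\lambda\leq\eta$, equipartition plus the coercivity of the viscosity on the high modes, and a uniform Ingham inequality (exploiting the gap $\omega_k-\omega_{k-1}\simeq\pi\cos(k\pi\triangle x/2)$, uniform below the threshold) together with the $\int_\omega\sin^2(k\pi x)\,dx$ lemma on the low modes. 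This is essentially the ``filtering'' route that the paper describes later in the remark on \cite{EZ1} and \cite{Ervedoza_COCV2010} (where Ingham is indeed used for the mixed finite element scheme), made concrete for finite differences. What each approach buys: your spectral argument is more transparent, explains exactly why the exponent $\sigma=2$ is the right strength of viscosity (your ``intermediate band'' observation), but is confined to settings with explicit eigenvalues and a uniform gap -- essentially 1D -- and forces $T$ to be large depending on the filtering threshold $\eta$; the multiplier method is less illuminating but robust, which is why \cite{tebou_zuazua} can treat the 2D square and \cite{MunchPazoto} general 2D domains. Two small points to tighten: the transfer of the $\sin^2$ lemma to the discrete sum $\triangle x\sum_{x_j\in\omega}\sin^2(k\pi x_j)$ is a Riemann-sum estimate with error $O(k\triangle x)=O(\eta)$, so it requires $\eta$ small and $\omega$ open (or a finite union of intervals) rather than merely measurable; and in the cross-term step it is cleaner to avoid absorption altogether by writing $(\dot\phi^L)^2\leq 2\dot\phi^2+2(\dot\phi^H)^2$ and controlling the second term by $\Vert a\Vert_\infty\eta^{-1}$ times the viscosity observation, since absorbing into the viscosity with coefficient $1$ could fail when $\Vert a\Vert_\infty>\eta$.
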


In other words, adding an appropriate viscosity term allows to recover a uniform exponential decrease.

An intuitive explanation is that the high frequency eigenvalues are of the order of $\frac{1}{(\triangle x)^2}$. Indeed,
$$ 
\lambda_{\triangle x,j}^2 = -\frac{4}{(\triangle x)^2}\sin^2\frac{j\pi \triangle x}{2},\quad
j=1,\ldots,N.
$$
Then for highfrequencies the viscosity term $(\triangle x)^2\triangle_{\triangle x}\dot y_{\triangle x}$ has the order of $\dot y_{\triangle x}$, and hence it behaves as a \emph{damping on highfrequencies}.
The main result of \cite{tebou_zuazua} shows that it suffices to recover an uniform exponential decay.

Their proof mainly consists of proving the \emph{uniform observability inequality}
$$
E_{\triangle x}(0) \leq C \triangle x\, \sum_{j=1}^N  \int_0^T \left( a_j (\dot \phi_j(t))^2 +(\triangle x)^2\left( \frac{\dot\phi_{j+1}-\dot\phi_j}{\triangle x} \right)^2  \right) dt ,
$$
for every solution of
\begin{equation*}
\begin{split}
& \frac{\ddot \phi_{j+1}+2\ddot \phi_j+\ddot \phi_{j-1}}{4} - \frac{\phi_{j+1}-2\phi_j+\phi_{j-1}}{(\triangle x)^2} = 0 , \qquad j=1,\ldots,N ,\\
& \phi_0(t)=\phi_{N+1}(t)=0 ,
\end{split}
\end{equation*}
and this follows from a meticulous application of a discrete version of the multiplier method.

Note that they also prove the result in a 2D square, and that this was generalized to any regular 2D domain in \cite{MunchPazoto}.

\subsubsection{General results for linear damped equations, with a viscosity operator}
The idea of adding a viscosity in the numerical scheme, used in \cite{tebou_zuazua} in order to recover a uniform exponential decay rate for the finite-difference space semi-discretization of the linear damped wave equation, can be generalized as follows.

We consider \eqref{main_eq} in the linear case $F=\mathrm{id}_X$, that is,
$$
u'(t) + Au(t) + Bu(t) = 0,
$$
with $A:D(A)\rightarrow X$ skew-adjoint and $B=B^*\geq 0$ bounded.
The energy along any solution is
$$
E(t) = \frac{1}{2}\Vert u(t)\Vert_X^2,
$$
and we have
$$
\dot E(t)=-\Vert B^{1/2}u(t)\Vert_X^2.
$$
We consider the space semi-discrete model \emph{with viscosity}:
\begin{equation}\label{linvisc}
\dot u_{\triangle x} + A_{\triangle x}u_{\triangle x} + B_{\triangle x}u_{\triangle x} + (\triangle x)^\sigma\mathcal{V}_{\triangle x} u_{\triangle x} = 0 ,
\end{equation}
where $\mathcal{V}_{\triangle x}:X_{\triangle x}\rightarrow X_{\triangle x}$ is a positive selfadjoint operator, called the \emph{viscosity operator}.

In the previous example (1D damped wave equation), the viscosity term was $- (\triangle x)^2\triangle_{\triangle x} y_{\triangle x}'$, hence
$$
\sigma=2,\qquad
\mathcal{V}_{\triangle x} = \begin{pmatrix}
0 & 0\\
0 & -\triangle_{\triangle x} .
\end{pmatrix} 
$$
Note that it is not positive definite, however it was proved to be sufficient to infer uniformity.

In general, as we are going to see, a typical example of a viscosity operator is $\mathcal{V}_{\triangle x} = \sqrt{A_{\triangle x}^*A_{\triangle x}}$, plus, possibly, $\varepsilon\,\mathrm{id}_{X_{\triangle x}}$ (to make is positive).

In any case, the energy along a solution of \eqref{linvisc}, defined by
$$
E_{u_{\triangle x}}(t)=\frac{1}{2}\Vert u_{\triangle x}(t)\Vert_{\triangle x}^2,
$$
has a derivative in time given by
$$
\dot E_{u_{\triangle x}}(t) = -\Vert B_{\triangle x}^{1/2}u_{\triangle x}(t)\Vert_{\triangle x}^2
-(\triangle x)^\sigma\Vert (\mathcal{V}_{\triangle x})^{1/2} u_{\triangle x}(t) \Vert_{\triangle x}^2 \leq 0.
$$
Now, we have the following result.

\begin{theorem}[\cite{EZ1}]
Assume that the uniform observability inequality
$$
C\Vert\phi_{\triangle x}(0)\Vert^2 \leq \int_0^T \left( \Vert B_{\triangle x}^{1/2}\phi_{\triangle x}(t)\Vert^2 + (\triangle x)^\sigma \Vert \mathcal{V}_{\triangle x}^{1/2} \phi_{\triangle x}(t)\Vert^2 \right) dt
$$
holds for every solution of the conservative system 
$$
\dot\phi_{\triangle x}+A_{\triangle x}\phi_{\triangle x}=0.
$$
Then the uniform exponential decrease holds, i.e., there exist $C>0$ and $\delta>0$ such that 
$$
E_{u_{\triangle x}}(t) \leq C\exp (-\delta t) E_{u_{\triangle x}}(0) ,
$$
for every $\triangle x$, for every solution $u_{\triangle x}$ of \eqref{linvisc}.
\end{theorem}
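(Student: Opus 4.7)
The strategy is to recognize that, once the dissipation and the viscosity are bundled together, the theorem reduces to a uniform version of Theorem \ref{thm_Haraux}. I set
$$D_{\triangle x} = B_{\triangle x} + (\triangle x)^\sigma \mathcal{V}_{\triangle x},$$
which is bounded, selfadjoint and nonnegative on $X_{\triangle x}$. Then the damped equation \eqref{linvisc} reads $\dot u_{\triangle x}+A_{\triangle x} u_{\triangle x}+D_{\triangle x} u_{\triangle x}=0$, the energy identity becomes $\dot E_{u_{\triangle x}} = -\Vert D_{\triangle x}^{1/2} u_{\triangle x}\Vert_{\triangle x}^2$, and the assumed observability inequality can be rewritten as
$$C\,\Vert\phi_{\triangle x}(0)\Vert_{\triangle x}^2 \leq \int_0^T \Vert D_{\triangle x}^{1/2}\phi_{\triangle x}(t)\Vert_{\triangle x}^2\, dt,$$
because $\Vert D_{\triangle x}^{1/2}\phi\Vert_{\triangle x}^2 = \Vert B_{\triangle x}^{1/2}\phi\Vert_{\triangle x}^2 + (\triangle x)^\sigma \Vert \mathcal{V}_{\triangle x}^{1/2}\phi\Vert_{\triangle x}^2$. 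We are thus exactly in the setting of Theorem \ref{thm_Haraux}, only in finite dimension and with parameter $\triangle x$.

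Next, I would literally replay the first implication in the proof of Theorem \ref{thm_Haraux} in the space $X_{\triangle x}$, with $B$ replaced by $D_{\triangle x}$. Let $\phi_{\triangle x}$ solve $\dot\phi_{\triangle x}+A_{\triangle x}\phi_{\triangle x}=0$ with $\phi_{\triangle x}(0)=u_{\triangle x}(0)$, and set $\theta_{\triangle x}=u_{\triangle x}-\phi_{\triangle x}$, so that $\theta_{\triangle x}(0)=0$ and $\dot\theta_{\triangle x}+A_{\triangle x}\theta_{\triangle x}+D_{\triangle x} u_{\triangle x}=0$. Taking the inner product with $\theta_{\triangle x}$, using skew-symmetry of $A_{\triangle x}$, integrating twice (once on $[0,t]$ then on $[0,T]$) and applying Young's inequality exactly as in the proof of Theorem \ref{thm_Haraux} yields
$$\int_0^T \Vert\theta_{\triangle x}(t)\Vert_{\triangle x}^2\,dt \leq 4T^2\Vert D_{\triangle x}^{1/2}\Vert^2 \int_0^T \Vert D_{\triangle x}^{1/2} u_{\triangle x}(t)\Vert_{\triangle x}^2\,dt.$$
Combining with the observability inequality applied to $\phi_{\triangle x}=u_{\triangle x}-\theta_{\triangle x}$, and using the dissipation identity $\int_0^T \Vert D_{\triangle x}^{1/2}u_{\triangle x}(t)\Vert_{\triangle x}^2\,dt = E_{u_{\triangle x}}(0)-E_{u_{\triangle x}}(T)$, one obtains
$$E_{u_{\triangle x}}(0) \leq C\bigl(1+4T^2\Vert D_{\triangle x}^{1/2}\Vert^4\bigr)\,\bigl(E_{u_{\triangle x}}(0)-E_{u_{\triangle x}}(T)\bigr),$$
so $E_{u_{\triangle x}}(T)\leq \kappa\, E_{u_{\triangle x}}(0)$ for some $\kappa<1$, and the standard iteration on successive intervals $[kT,(k+1)T]$ yields the exponential decay.

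The only genuinely non-routine point—and the one on which uniformity rests—is that the constant $\kappa$ must be \emph{uniform in} $\triangle x$. Inspecting the formula, $\kappa$ depends only on $T$, on the observability constant $C$ (uniform by hypothesis), and on $\Vert D_{\triangle x}\Vert$. The bound $\Vert B_{\triangle x}\Vert \lesssim 1$ is built into the semi-discretization framework, and the whole point of inserting the prefactor $(\triangle x)^\sigma$ in front of $\mathcal{V}_{\triangle x}$ is precisely to enforce $(\triangle x)^\sigma\Vert \mathcal{V}_{\triangle x}\Vert\lesssim 1$ (as in the wave example where $\sigma=2$ exactly compensates the $O((\triangle x)^{-2})$ norm of the discrete Laplacian). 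Under this natural scaling, $\Vert D_{\triangle x}\Vert$ is uniformly bounded, so $\kappa$ and the resulting rate are uniform in $\triangle x$, as desired.
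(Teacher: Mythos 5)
Your proof is correct, and it follows the route the paper itself implicitly intends: the paper gives no proof of this statement (it only cites \cite{EZ1}), but your argument is exactly the paper's proof of Theorem \ref{thm_Haraux} transplanted to $X_{\triangle x}$ with the damping and viscosity bundled into $D_{\triangle x}=B_{\triangle x}+(\triangle x)^\sigma\mathcal{V}_{\triangle x}$, which is the standard mechanism behind the Ervedoza--Zuazua result. You were right to single out the uniform bound $(\triangle x)^\sigma\Vert\mathcal{V}_{\triangle x}\Vert\lesssim 1$ as the crux: it is not literally among the stated hypotheses but is an indispensable standing assumption of the discretization framework (without it, overdamping can destroy uniformity even when the uniform observability inequality holds, as a $2\times 2$ example with damping $k\to\infty$ already shows), and it is satisfied by the typical choices $\mathcal{V}_{\triangle x}=\sqrt{A_{\triangle x}^*A_{\triangle x}}$ or $-(\triangle x)^2\triangle_{\triangle x}$ with $\sigma=2$.
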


This theorem says that, in general, adding a viscosity term in the numerical scheme helps to recover a uniform exponential decay, provided a uniform observability inequality holds true for the corresponding conservative equation.
Of course, given a specific equation, the difficulty is to establish the uniform observability inequality. This is a difficult issue, investigated in some particular cases (see \cite{ZuazuaSIREV} for a survey).

Let us mention several other related results.
In \cite{Ervedoza_NM2009}, such uniform properties are established for second-order equations, with a self-adjoint positive operator, semi-discretized in space by means of finite elements on general meshes, not necessarily regular. The results have been generalized in \cite{Miller}.
In \cite{ANVE}, one can find similar results under appropriate spectral gap conditions (in this reference, we can also find results on polynomial decay, see further).
In \cite{RTT_JMPA2006,RTT_COCV2007}, viscosity operators are used for the plate equation, and for general classes of second-order evolution equations under appropriate spectral gap assumptions, combined with the uniform Huang-Pr\"uss conditions derived in \cite{LiuZheng}.

\begin{remark}
In order to recover uniformity, there exist other ways than using viscosities (see \cite{ZuazuaSIREV}:
\begin{itemize}
\item Use an appropriate numerical scheme (such as mixed finite elements, multigrids).

For instance, considering again the 1D damped wave equation \eqref{1Ddampedwave}, the mixed finite element space semi-discretization model is
\begin{equation*}
\begin{split}
& \frac{\ddot y_{j+1}+2\ddot y_j+\ddot y_{j-1}}{4} - \frac{y_{j+1}-2y_j+y_{j-1}}{(\triangle x)^2} + 
\frac{a_{j-1/2}}{4}( \dot y_{j-1}+\dot y_j) +  \frac{a_{j+1/2}}{4}( \dot y_j+\dot y_{j+1})  = 0,
\\
& y_0(t)=y_{N+1}(t)=0,\qquad\qquad\qquad\qquad\qquad\qquad j=1,\ldots,N.
\end{split}
\end{equation*}
The energy of any solution is
$$
E_{\triangle x}(t) = \frac{\triangle x}{2} \sum_{j=0}^N \left( \left( \frac{\dot y_{j+1}(t)+\dot y_j(t)}{2}\right)^2  + \left( \frac{y_{j+1}(t)-y_j(t)}{\triangle x}\right)^2 \right) ,
$$
and one has
$$
\dot E_{\triangle x}(t) = -\triangle x\, \sum_{j=0}^{N-1} a_{j+1/2} \left( \frac{\dot y_{j+1}(t)+\dot y_j(t)}{2}\right)^2 \leq 0 .
$$
It is then proved in \cite{Ervedoza_COCV2010} that $E_{\triangle x}(t)$ decreases exponentially, uniformly with respect to $\triangle x$.

The proof consists of proving that the \emph{uniform} observability inequality:
$$
E_{\triangle x}(0) \leq C \triangle x\, \sum_{j=1}^N  \int_0^T a_{j+1/2} \left( \frac{\dot \phi_{j+1}(t)+\dot \phi_j(t)}{2}\right)^2 dt ,
$$
for any solution of
\begin{equation*}
\begin{split}
& \ddot \phi_j - \frac{\phi_{j+1}-2\phi_j+\phi_{j-1}}{(\triangle x)^2} = 0,\qquad j=1,\ldots,N,\\
& \phi_0(t)=\phi_{N+1}(t)=0 .
\end{split}
\end{equation*}
The proof of this uniform observability inequality is done thanks to a careful spectral analysis, using Ingham's inequality.

The main difference with the finite-difference numerical scheme is that, here, the discrete eigenvalues are given by
$$ 
\lambda_{\triangle x,j}^2 = -\frac{4}{(\triangle x)^2}\tan^2\frac{j\pi h}{2},\quad
j=1,\ldots,N,
$$
and then their behavior at highfrequencies ($j$ close to $N$) is very different from the case of finite differences where the $\tan$ function was replaced with the $\sin$ function: the derivative of the discrete symbol is now bounded below and hence there are not anymore any wavepackets with vanishing speed (see \cite{ZuazuaSIREV}).

\item Filtering highfrequencies.

In the general framework 
$$
\dot u + Au+Bu=0 ,
$$
with $A:D(A)\rightarrow X$ skew-adjoint and $B=B^*\geq 0$ bounded ,
considering the semidiscrete model
$$
\dot u_{\triangle x} + A_{\triangle x}u_{\triangle x}+B_{\triangle x}u_{\triangle x}=0 ,
$$
the idea (well surveyed in \cite{ZuazuaSIREV}) consists of filtering out the highfrequencies, so that the uniform observability inequality
$$
C\Vert\phi_{\triangle x}(0)\Vert^2 \leq \int_0^T \Vert B_{\triangle x}^{1/2}\phi_{\triangle x}(t)\Vert^2 \, dt
$$
holds for every solution of the conservative system $\dot\phi_{\triangle x}=A_{\triangle x}\phi_{\triangle x}$ with
$$
\phi_{\triangle x}(0)\in \mathcal{C}_{\delta/(\triangle x)^\sigma}(A_{\triangle x}) \qquad\qquad\textrm{(spectrally filtered initial data)},
$$
with
$$
\mathcal{C}_{s}(A_{\triangle x}) = \mathrm{Span}\{ e_{\triangle x, j} \ \mid\  \vert\lambda_{\triangle x, j}\vert\leq s  \}.
$$
Here, the vectors $e_{\triangle x, j}$ are eigenvectors of $A_{\triangle x}$ associated with the eigenvalues $\lambda_{\triangle x, j}$.
Then the uniform exponential decrease holds.

\medskip

Actually, as noticed in \cite{EZ1}, we have equivalence of:
\begin{itemize}
\item Every solution of $\dot\phi_{\triangle x}=A_{\triangle x}\phi_{\triangle x}$ satisfies
$$
C_1\Vert\phi_{\triangle x}(0)\Vert^2 \leq \int_0^T \left( \Vert B_{\triangle x}^{1/2}\phi_{\triangle x}(t)\Vert^2 + (\triangle x)^\sigma \Vert \mathcal{V}_{\triangle x}^{1/2} \phi_{\triangle x}(t)\Vert^2 \right) dt .
$$
\item Every solution of $\dot\phi_{\triangle x}=A_{\triangle x}\phi_{\triangle x}$ with $\phi_{\triangle x}(0)\in \mathcal{C}_{\delta/(\triangle x)^\sigma}(A_{\triangle x})$ satisfies
$$
C_2\Vert\phi_{\triangle x}(0)\Vert^2 \leq \int_0^T  \Vert B_{\triangle x}^{1/2}\phi_{\triangle x}(t)\Vert^2 \, dt .
$$
\end{itemize}
\end{itemize}

In conclusion, there are several ways to recover uniform properties for space semi-discretizations:
\begin{enumerate}
\item Add a viscosity term (sometimes called Tychonoff regularization): its role being to damp out the spurious highfrequencies.
\item Use an adapted method (like mixed finite elements, or multi-grid methods), directly leading to an uniform observability inequality.
\item Filter out highfrequencies, by using spectral projections.
\end{enumerate}
In our case, we are going to focus on the use of appropriate viscosity terms, in order to deal more easily with nonlinear terms.
\end{remark}

\subsubsection{Viscosity in general semilinear equations}
We now come back to the study of the general semilinear equation \eqref{main_eq}, and according to the previous analysis, we consider the space semi-discrete approximation of \eqref{main_eq} given by
\begin{equation}\label{waveqSpaceDiscrete}
u_{\triangle x}'(t)+A_{\triangle x}u_{\triangle x}(t)+B_{\triangle x}F_{\triangle x}(u_{\triangle x}(t))+(\triangle x)^\sigma\mathcal{V}_{\triangle x} u_{\triangle x}(t)=0.
\end{equation}
As before, the additional term $(\triangle x)^\sigma\mathcal{V}_{\triangle x}u_{\triangle x}(t)$, with $\sigma>0$, is a \emph{numerical viscosity term} whose role is crucial in order to establish decay estimates that are uniform with respect to $\triangle x$. 
We only assume that $\mathcal{V}_{\triangle x}:X_{\triangle x}\rightarrow X_{\triangle x}$ (viscosity operator) is a positive selfadjoint operator.

Our objective is to be able to guarantee a uniform decay, with the decay rate obtained in Theorem \ref{thm_continuous}.

The energy of a solution $u_{\triangle x}$ of \eqref{waveqSpaceDiscrete} is
\begin{equation*}
E_{u_{\triangle x}}(t)=\frac{1}{2}\Vert u_{\triangle x}(t)\Vert_{\triangle x}^2 ,
\end{equation*}
and we have, as long as the solution is well defined,
\begin{equation*}
E_{u_{\triangle x}}'(t) = -\langle u_{\triangle x}(t),B_{\triangle x}F_{\triangle x}(u_{\triangle x}(t))\rangle_{\triangle x} 
-(\triangle x)^\sigma\Vert (\mathcal{V}_{\triangle x})^{1/2} u_{\triangle x}(t) \Vert_{\triangle x}^2 .
\end{equation*}

We assume that
\begin{equation*}
\langle u_{\triangle x},B_{\triangle x}F_{\triangle x}(u_{\triangle x})\rangle_{\triangle x} 
+ (\triangle x)^\sigma\Vert (\mathcal{V}_{\triangle x})^{1/2} u_{\triangle x} \Vert_{\triangle x}^2
\geq 0,
\end{equation*}
for every $u_{\triangle x}\in X_{\triangle x}$. Hence the energy $E_{u_{\triangle x}}(t)$ is nonincreasing.

For every $f\in L^2(\Omega,\mu)$, we set
\begin{equation*}
\tilde\rho_{\triangle x}(f) = U^{-1}\tilde P_{\triangle x} P_{\triangle x} U \rho(f)
= U^{-1}\tilde P_{\triangle x} P_{\triangle x} F(Uf) .
\end{equation*}
The mapping $\tilde\rho_{\triangle x}$ is the mapping $\rho$ filtered by the ``sampling operator" 
$$
U^{-1}\tilde P_{\triangle x} P_{\triangle x}U = (P_{\triangle x}U)^* P_{\triangle x}U .
$$
By assumption, the latter operator converges pointwise to the identity as $\triangle x\rightarrow  0$, and in many numerical schemes it corresponds to take sampled values of a given function $f$.

We have $\tilde \rho_{\triangle x}(0)=0$.
Setting $f_{\triangle x} = U^{-1}\tilde P_{\triangle x} u_{\triangle x}$, we assume that 
\begin{equation*}
f_{\triangle x} \tilde\rho_{\triangle x}(f_{\triangle x}) \geq 0, 
\end{equation*}
and that
\begin{equation*}
\begin{split}
c_1\, g(\vert f_{\triangle x}(x)\vert) &\leq \vert\tilde\rho_{\triangle x}(f_{\triangle x})(x)\vert \leq c_2\, g^{-1}(\vert f_{\triangle x}(x)\vert) \quad \textrm{for a.e.}\ x\in\Omega\ \textrm{such that}\ \vert f_{\triangle x}(x)\vert\leq 1,    \\
c_1\, \vert f_{\triangle x}(x)\vert & \leq \vert\tilde\rho_{\triangle x}(f_{\triangle x})(x)\vert \leq c_2 \, \vert f_{\triangle x}(x)\vert\qquad\quad\ \textrm{for a.e.}\ x\in\Omega\ \textrm{such that}\ \vert f_{\triangle x}(x)\vert\geq 1 ,
\end{split}
\end{equation*}
for every $u_{\triangle x}\in X_{\triangle x}$, for every $\triangle x\in(0,\triangle x_0)$.

Note that these additional assumptions are valid for many classical numerical schemes, such as finite differences, finite elements, and in more general, for any method based on Lagrange interpolation, in which inequalities or sign conditions are preserved under sampling. But for instance this assumption may fail for spectral methods (global polynomial approximation) in which sign conditions may not be preserved at the nodes of the scheme. 

\begin{theorem}[\cite{AlabauPrivatTrelat}]\label{thm_space}
In addition to the above assumptions, we assume that there exist $T>0$, $\sigma>0$ and $C_T>0$ such that 
\begin{equation}\label{ineqObsDiscrete}
C_T E_{\phi_{\triangle x}}(0)\leq \int_0^T\left(\Vert B_{\triangle x}^{1/2}\phi_{\triangle x}(t)\Vert_{\triangle x}^2+(\triangle x)^\sigma\Vert \mathcal{V}_{\triangle x}^{1/2}\phi_{\triangle x}(t)\Vert_{\triangle x}^{2}\right)\, dt,
\end{equation}
for every solution of $\phi_{\triangle x}'(t)+A_{\triangle x}\phi_{\triangle x}(t)=0$ (uniform observability inequality with viscosity for the space semi-discretized linear conservative equation).

Then, the solutions of \eqref{waveqSpaceDiscrete}, with values in $X_{\triangle x}$, are well defined on $[0,+\infty)$, and the energy of any solution satisfies
$$
E_{u_{\triangle x}}(t) \lesssim T \max(\gamma_1, E_{u_{\triangle x}}(0)) \, L\left( \frac{1}{\psi^{-1}( \gamma_2 t )} \right) ,
$$
for every $t\geq 0$, with $\gamma_1 \simeq \Vert B\Vert /\gamma_2$ and $\gamma_2 \simeq C_T/ T( T^2\Vert B^{1/2}\Vert^4+1)$.
Moreover, under \eqref{condlimsup}, we have the simplified decay rate
$$
E_{u_{\triangle x}}(t) \lesssim T \max(\gamma_1, E_{u_{\triangle x}}(0)) \, (H')^{-1}\left(\frac{\gamma_3}{t}\right),
$$
for every $t\geq 0$, for some positive constant $\gamma_3\simeq 1$.
\end{theorem}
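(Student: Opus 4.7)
The plan is to mirror the four-step scheme used in the proof of Theorem~\ref{thm_continuous}, tracking the numerical viscosity term throughout and exploiting the uniform observability inequality \eqref{ineqObsDiscrete} wherever its continuous counterpart was invoked. The whole point is that by construction the semi-discrete problem has exactly the same algebraic structure as the continuous one (skew-symmetric drift, bounded symmetric nonnegative damping, dissipative nonlinearity), with one extra nonnegative term $(\triangle x)^\sigma \mathcal{V}_{\triangle x} u_{\triangle x}$ on the right-hand side of the energy identity. Since the assumption \eqref{ineqObsDiscrete} already quantifies the contribution of this extra term in the observability estimate, the constants in the resulting decay rate will come out uniform in $\triangle x$.

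First I would set up local existence of $u_{\triangle x}(\cdot)$ in $X_{\triangle x}$ (this is standard since $F_{\triangle x}$ is locally Lipschitz uniformly in $\triangle x$) and check that the energy identity gives $E_{u_{\triangle x}}'(t)\leq 0$, so solutions are globally defined. Then comes Step~1: introduce the linear damped-with-viscosity auxiliary system
\begin{equation*}
z_{\triangle x}'(t)+A_{\triangle x}z_{\triangle x}(t)+B_{\triangle x}z_{\triangle x}(t)+(\triangle x)^\sigma\mathcal{V}_{\triangle x} z_{\triangle x}(t)=0,\qquad z_{\triangle x}(0)=u_{\triangle x}(0),
\end{equation*}
and compare it with \eqref{waveqSpaceDiscrete}. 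Taking the scalar product of the equation for $\theta_{\triangle x}=u_{\triangle x}-z_{\triangle x}$ with $\theta_{\triangle x}$, using skew-symmetry of $A_{\triangle x}$ and positivity of $B_{\triangle x}$ and $\mathcal{V}_{\triangle x}$, then integrating twice on $[0,T]$ and applying Young exactly as in the continuous proof, should yield an estimate of the form
\begin{equation*}
\int_0^T\!\!\bigl(\Vert B_{\triangle x}^{1/2}z_{\triangle x}\Vert_{\triangle x}^2 + (\triangle x)^\sigma\Vert \mathcal{V}_{\triangle x}^{1/2}z_{\triangle x}\Vert_{\triangle x}^2\bigr)\,dt \lesssim \int_0^T\!\!\bigl(\Vert B_{\triangle x}^{1/2} u_{\triangle x}\Vert_{\triangle x}^2 + \Vert B_{\triangle x}^{1/2}F_{\triangle x}(u_{\triangle x})\Vert_{\triangle x}^2 + (\triangle x)^\sigma\Vert\mathcal V_{\triangle x}^{1/2}u_{\triangle x}\Vert_{\triangle x}^2\bigr)dt,
\end{equation*}
with constants depending only on $T$ and $\Vert B\Vert$.

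Step~2 is similar in spirit: compare $z_{\triangle x}$ to the conservative trajectory $\phi_{\triangle x}$ with the same initial datum, which, combined with \eqref{ineqObsDiscrete}, produces a lower bound of the form $C_T E_{u_{\triangle x}}(0) \lesssim \int_0^T(\Vert B_{\triangle x}^{1/2}z_{\triangle x}\Vert_{\triangle x}^2 + (\triangle x)^\sigma\Vert \mathcal{V}_{\triangle x}^{1/2}z_{\triangle x}\Vert_{\triangle x}^2)\,dt$, with the viscosity term handled identically on both sides. Step~3 is the optimal-weight nonlinear energy estimate: transport the problem through the spectral isometry $U$, write everything in terms of $f_{\triangle x}=U^{-1}\widetilde P_{\triangle x} u_{\triangle x}$ and the filtered nonlinearity $\tilde\rho_{\triangle x}$, and apply the convex-duality argument of Alabau to the weight $w(s)=L^{-1}(s/\beta)$. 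The assumption that $\tilde\rho_{\triangle x}$ satisfies the same upper/lower bounds with $g$ as $\rho$, uniformly in $\triangle x$, is precisely what is needed so that the pointwise manipulations in the proof of Theorem~\ref{thm_continuous} transfer verbatim to the discrete setting; the viscosity contribution only enters as an additional nonnegative term on the dissipation side, so the inequality goes in the right direction.

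Concatenating Steps~1--3 as in the continuous case yields
\begin{equation*}
E_{u_{\triangle x}}(T)\leq E_{u_{\triangle x}}(0)\left(1-\rho_T\, L^{-1}\!\left(\frac{E_{u_{\triangle x}}(0)}{\beta}\right)\right),
\end{equation*}
with $\rho_T$ and $\beta$ uniform in $\triangle x$, and a classical iteration argument on successive intervals $[kT,(k+1)T]$ together with the definition of $\psi$ translates this into the announced decay rate; when $\limsup_{s\to 0}\Lambda_H(s)<1$, the usual ODE comparison replaces $L(1/\psi^{-1}(\cdot))$ by $(H')^{-1}(\gamma_3/t)$. The main obstacle I expect is bookkeeping the viscosity term cleanly through Step~1: one must ensure that the cross-terms arising from $(\triangle x)^\sigma\mathcal{V}_{\triangle x}$ when Young's inequality is applied are either absorbed into the left-hand side or controlled by $(\triangle x)^\sigma\Vert\mathcal{V}_{\triangle x}^{1/2}u_{\triangle x}\Vert_{\triangle x}^2$, with a constant that does not degenerate as $\triangle x\to 0$; this relies on the fact that $\mathcal{V}_{\triangle x}$ need not be uniformly bounded, so these terms have to be estimated via $\mathcal{V}_{\triangle x}^{1/2}$ and never via $\mathcal{V}_{\triangle x}$ alone. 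The secondary technical point is to verify that the sampling hypothesis on $\tilde\rho_{\triangle x}$ is strong enough to get the estimate of Step~3 with a constant independent of $\triangle x$, which is exactly the reason the assumption is stated at the $f_{\triangle x}$ level rather than at the $u_{\triangle x}$ level.
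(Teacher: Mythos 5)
Your proposal is correct and follows essentially the same route as the paper: the paper itself only sketches the four-step argument for Theorem~\ref{thm_continuous} and defers the semi-discrete case to \cite{AlabauPrivatTrelat}, where the proof is precisely the adaptation you describe, with the viscosity contribution carried as an extra nonnegative dissipation term through Steps~1--3 and the uniform observability inequality \eqref{ineqObsDiscrete} replacing its continuous counterpart. You also correctly identify the one genuinely delicate point --- that the cross-terms involving $(\triangle x)^\sigma\mathcal{V}_{\triangle x}$ must be absorbed via the dissipation $(\triangle x)^\sigma\Vert\mathcal{V}_{\triangle x}^{1/2}\theta_{\triangle x}\Vert_{\triangle x}^2$ appearing in the energy identity for the difference, and never through an operator-norm bound on $\mathcal{V}_{\triangle x}$, which is not assumed to be uniformly bounded.
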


The main assumption above is the uniform observability inequality \eqref{ineqObsDiscrete}, which is not easy to obtain in general, as already discussed. We stress again that, without the viscosity term, this uniform observability inequality does not hold true in general.
As said previously, a typical example of a viscosity operator, for which uniform observability results do exist in the literature, is $\mathcal{V}_{\triangle x} = \sqrt{A_{\triangle x}^*A_{\triangle x}}$.

\subsection{Time semi-discretizations}
In this section, we show how to design a time semi-discrete numerical scheme for \eqref{main_eq}, while keeping uniform decay estimates. 

As a first remark, let us note that a naive explicit discretization
$$
\frac{u^{k+1}-u^{k}}{\Delta t} + A u^k + B F(u^k) = 0,\qquad t_k = k\triangle t,
$$
is never suitable since
\begin{itemize}
\item it is instable and does never satisfy the basic CFL condition (note also that we are going to consider full discretizations in the next subsection),
\item it does not preserve the energy of the conservative part.
\end{itemize}
Hence an implicit discretization is more appropriate.

Following \cite{EZ1}, we choose an implicit mid-point rule:
$$
\frac{u^{k+1}-u^{k}}{\Delta t} + A \left( \frac{u^k+u^{k+1}}{2} \right) + B F \left( \frac{u^k+u^{k+1}}{2} \right) = 0,\qquad\quad t_k=k \triangle t.
$$
Defining the energy by
$$
E_{u^k} = \frac{1}{2}\Vert u^k\Vert _{X}^2  ,
$$
we have
$$
E_{u^{k+1}} - E_{u^k} =  - \triangle t \left\langle B\left(\frac{u^k+ u^{k+1}}{2}\right), F\left(\frac{u^k+ u^{k+1}}{2}\right) \right\rangle_X .
$$
The question is then to determine whether we have or not a uniform decay.

As before, the answer is \textbf{NO} in general, and in order to see that, we are first going to focus on the linear case.

\subsubsection{Linear case}
Assuming that $F=\mathrm{id}_X$, we consider the equation
$$
u' + Au + Bu = 0,
$$
with the time semi-discrete model
$$
\frac{u^{k+1}-u^{k}}{\Delta t} + A \left( \frac{u^k+u^{k+1}}{2} \right) + B \left( \frac{u^k+u^{k+1}}{2} \right) = 0.
$$
We have
$$
E_{u^k} = \frac{1}{2}\Vert u^k\Vert _{X}^2 \qquad\Rightarrow\qquad
\frac{E_{u^{k+1}} - E_{u^k}}{\triangle t} =  - \left\Vert B^{1/2}\left(\frac{u^k+ u^{k+1}}{2}\right) \right\Vert_X^2 \leq 0,
$$
and we investigate the question of knowing whether we have a uniform exponential decrease
$$
E_{u^k}\leq C E_{u^0} \exp(-\delta k \Delta t) .
$$

A negative answer is given in \cite{ZhangZhengZuazua} for a linear damped wave equation. We do not provide the details, that are similar to what has already been said.

Therefore, as before, filtering, or adding an appropriate viscosity term, is required in a hope to recover uniformity.

In \cite{EZ1}, the authors propose to consider the following general numerical scheme:
\begin{equation*}
\left\{\begin{split}
& \frac{\widetilde{u}^{k+1} - u^k}{\triangle t} + A\left(\frac{u^k+ \widetilde{u}^{k+1}}{2}\right) + B \left( \frac{u^k+ \widetilde{u}^{k+1}}{2}\right) = 0 ,  \\
& \frac{  \widetilde{u}^{k+1} - u^{k+1} }{\triangle t} = \mathcal{V}_{\triangle t} u^{k+1}  ,
\end{split}\right.
\end{equation*}
where $\mathcal{V}_{\triangle t}:X\rightarrow X$ ia s positive selfadjoint operator (viscosity operator).
Written in an expansive way, we have
\begin{multline*}
\frac{u^{k+1} - u^k}{\triangle t}  + A\left(\frac{u^k+ u^{k+1}  }{2}\right)  
+ B \left( \frac{u^k+ u^{k+1} }{2}\right) \\
+ \frac{\triangle t}{2} B \mathcal{V}_{\triangle t} u^{k+1} 
+ \mathcal{V}_{\triangle t} u^{k+1}  + \frac{\triangle t}{2} A \mathcal{V}_{\triangle t} u^{k+1} = 0 .
\end{multline*}
Considering the energy $E_{u^k} = \frac{1}{2}\Vert u^k\Vert _{X}^2$, we have
\begin{equation*}
\frac{E_{u^{k+1}} - E_{u^k}}{\triangle t} = -  \left\Vert B^{1/2} \left(\frac{u^k+ \widetilde{u}^{k+1}}{2}\right) \right\Vert_X^2 
-  \Vert (\mathcal{V}_{\triangle t})^{1/2} u^{k+1}\Vert_X^2 - \frac{\triangle t}{2} \Vert \mathcal{V}_{\triangle t} u^{k+1}\Vert_X^2  \leq 0.
\end{equation*}
Now, the main (remarkable) result of \cite{EZ1} is the following.

\begin{theorem}[\cite{EZ1}]
\begin{enumerate}
\item We assume the \emph{uniform observability} inequality with viscosity for the time semi-discretized linear conservative equation with viscosity:
\begin{multline*}
C_T \Vert \phi^0\Vert_X^2 \leq \triangle t \sum_{k=0}^{N-1} \left\Vert  B^{1/2} \left( \frac{\phi^k+ \widetilde{\phi}^{k+1}}{2}\right)\right\Vert_X^2 \\
+ \triangle t \, \sum_{k=0}^{N-1} \Vert (\mathcal{V}_{\triangle t})^{1/2}\phi^{k+1}\Vert_X^2+
(\triangle t )^2 \sum_{k=0}^{N-1} \Vert \mathcal{V}_{\triangle t}\phi^{k+1}\Vert_X^2 
\end{multline*}
for all solutions of
\begin{equation*}
\left\{\begin{split}
& \frac{\widetilde{\phi}^{k+1} - \phi^k}{\triangle t} + A\left(\frac{\phi^k+ \widetilde{\phi}^{k+1}}{2}\right) = 0  \\
& \frac{\widetilde{\phi}^{k+1}-\phi^{k+1}}{\triangle t} = \mathcal{V}_{\triangle t} \phi^{k+1} \end{split}\right.
\end{equation*}
Then the uniform exponential decrease holds, i.e.,
$$
E_{u^k}\leq C E_{u^0} \exp(-\delta k \Delta t),
$$
for some positive constants $C$ and $\delta$ not depending on the solution.
\item The previous uniform observability inequality holds true if
$$
\mathcal{V}_{\triangle t} = -(\triangle t)^2 A^2 = (\triangle t)^2 A^*A .
$$
\end{enumerate}
\end{theorem}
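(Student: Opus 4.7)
The plan is to follow, in the discrete-in-time setting, exactly the comparison strategy used in the proof of Theorem~\ref{thm_Haraux}, and then to verify the observability inequality for the specific choice of $\mathcal{V}_{\triangle t}$ via a spectral argument.

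For part~(1), I would fix a solution $(u^k)_k$ of the damped scheme and let $(\phi^k, \widetilde\phi^{k+1})_k$ denote the solution of the conservative scheme with viscosity starting from $\phi^0 = u^0$. Setting $\theta^k = \phi^k - u^k$ (and $\widetilde\theta^{k+1}$ accordingly), the difference satisfies
$$
\frac{\widetilde\theta^{k+1}-\theta^k}{\triangle t} + A\left(\frac{\theta^k+\widetilde\theta^{k+1}}{2}\right) = B\left(\frac{u^k+\widetilde u^{k+1}}{2}\right),\qquad \theta^0 = 0,
$$
together with the viscosity update. Taking the discrete inner product with $(\theta^k+\widetilde\theta^{k+1})/2$ and exploiting skew-symmetry of $A$ yields the discrete analog of $\dot E_\theta = -\langle By,\theta\rangle$. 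A discrete Abel double summation (the direct counterpart of the double integral $\int_0^T\!\int_0^t$ used in Theorem~\ref{thm_Haraux}) together with the discrete Young inequality will then bound $\triangle t\sum_k \|\theta^k\|_X^2$ by a multiple of $\triangle t\sum_k \|B^{1/2}(u^k+\widetilde u^{k+1})/2\|_X^2$ plus the viscosity dissipation. Combined with the triangle inequality on $B^{1/2}\phi = B^{1/2}u + B^{1/2}\theta$ and the assumed uniform observability on $\phi$, this produces
$$
\|u^0\|_X^2 \lesssim \triangle t\!\sum_{k=0}^{N-1}\!\left(\left\|B^{1/2}\tfrac{u^k+\widetilde u^{k+1}}{2}\right\|_X^2 + \|\mathcal{V}_{\triangle t}^{1/2} u^{k+1}\|_X^2 + \triangle t\,\|\mathcal{V}_{\triangle t} u^{k+1}\|_X^2\right),
$$
and the right-hand side equals $E_{u^0}-E_{u^N}$ by the energy identity already displayed above. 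Hence $E_{u^N}\leq (1-c)E_{u^0}$ with $c\in(0,1)$ uniform in $\triangle t$, and iterating this estimate on successive windows of $N$ steps yields the claimed uniform exponential decrease, exactly as at the end of the proof of Theorem~\ref{thm_Haraux}.

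For part~(2), with the specific viscosity $\mathcal{V}_{\triangle t} = (\triangle t)^2 A^*A$, I would diagonalize in the spectral basis of the skew-adjoint operator $A$, with eigenvalues $i\mu_j$. On mode $j$, the midpoint step acts by multiplication by the unit-modulus Möbius symbol $\sigma_j = (1-i\triangle t\mu_j/2)/(1+i\triangle t\mu_j/2)$, while $\mathcal{V}_{\triangle t}$ acts by $(\triangle t\mu_j)^2$. The viscosity term on the right-hand side of the observability inequality therefore controls, mode by mode, a factor $(\triangle t\mu_j)^2$ times the energy, which dominates on the high-frequency range $|\triangle t\mu_j|\gtrsim 1$. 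On the complementary low-frequency range $|\triangle t\mu_j|\leq \delta$, a Taylor expansion shows that $\sigma_j^k \approx e^{-ik\triangle t\mu_j}$ with second-order accuracy in $\triangle t\mu_j$, so the discrete trajectory stays uniformly close to the continuous conservative flow over $[0,T]$; the observability of the continuous equation (obtained by passing $\triangle t\to 0$ in the assumed inequality, or taken as hypothesis in the setting of Theorem~\ref{thm_continuous}) can then be transferred to a low-frequency discrete observability by the damping term alone, with constants independent of $\triangle t$. Splitting $\phi^0$ according to this cutoff and summing the two estimates yields the desired uniform observability inequality with viscosity.

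The main obstacle will be the low-frequency comparison in part~(2): the quantitative control of $|\sigma_j^k - e^{-ik\triangle t\mu_j}|$ must be balanced against the strength of the viscosity on high frequencies, and the cutoff $\delta$ must be chosen so that both the high-frequency contribution is absorbed by $(\triangle t)^2\|A\phi^{k+1}\|_X^2$ and the low-frequency contribution is dominated by $\|B^{1/2}(\phi^k+\widetilde\phi^{k+1})/2\|_X^2$, uniformly in $\triangle t$. This is in the spirit of the Huang--Pr\"uss-type arguments in \cite{EZ1}, and it is precisely where the choice $\mathcal{V}_{\triangle t}=(\triangle t)^2 A^*A$ (rather than a weaker viscosity) plays an essential role.
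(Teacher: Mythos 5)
The paper does not actually prove this theorem: it is quoted from \cite{EZ1}, and the only indication given is the one-line remark that ``the proof is done by resolvent estimates (Hautus test) and by carefully treating lower and higher frequencies.'' So your proposal must be judged against that reference rather than against an in-paper argument.

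For part~(1), your strategy is the right one and is indeed how \cite{EZ1} proceeds: it is the discrete transposition of the perturbation argument of Theorem~\ref{thm_Haraux}. One point you gloss over: the assumed observability inequality for $\phi$ has \emph{three} terms on its right-hand side, and you only explain how to transfer the $B^{1/2}$ term from $\phi$ to $u$ via the triangle inequality and the bound on $\triangle t\sum_k\Vert\theta^k\Vert_X^2$. You must also bound $\triangle t\sum_k\Vert\mathcal{V}_{\triangle t}^{1/2}\theta^{k+1}\Vert_X^2$ and $(\triangle t)^2\sum_k\Vert\mathcal{V}_{\triangle t}\theta^{k+1}\Vert_X^2$, and since $\mathcal{V}_{\triangle t}$ is not uniformly bounded these do not follow from the bound on $\Vert\theta^k\Vert_X$ alone; you need to keep the viscosity dissipation of $\theta$ in its discrete energy identity (it appears with the good sign, so summing the identity and using $\theta^0=0$ controls these terms by the same source term). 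This is a fixable omission, not a wrong turn.

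Part~(2) contains a genuine gap. Your low-frequency step compares the discrete trajectory $\sigma_j^k$ with the continuous one $e^{-ik\triangle t\mu_j}$ over the whole window $[0,T]$. The per-step phase error is of order $(\triangle t\mu_j)^3$, so after $N=T/\triangle t$ steps the accumulated error is of order $T\mu_j(\triangle t\mu_j)^2$; requiring this to be small forces $\mu_j\lesssim 1/(T\delta^2)$, a bound \emph{independent of} $\triangle t$. Meanwhile the viscosity $(\triangle t)^2A^*A$ only dominates when $\vert\triangle t\mu_j\vert\gtrsim 1$. The entire intermediate range $1\ll\vert\mu_j\vert\ll 1/\triangle t$ is therefore covered by neither half of your splitting. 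This is precisely why \cite{EZ1} works with a resolvent (Hautus-test) characterization of observability, which is a frequency-local, time-independent criterion and does not suffer from error accumulation over the observation window: the continuous observability yields a uniform resolvent estimate, which transfers to the discrete conservative propagator in the filtered range $\vert\mu_j\vert\leq\delta/\triangle t$, and the viscosity terms handle the complement. Note also that your diagonalization presupposes that $A$ has a complete eigenbasis, which is not among the hypotheses ($A$ is merely densely defined and skew-adjoint); the resolvent formulation avoids this as well.
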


Note that this result provides a choice viscosity operator that always works, in the sense that the uniform observability inequality is systematically valid with this viscosity.
Other choices are possible, such as 
$$
\mathcal{V}_{\triangle t} = - (\mathrm{id}_X- (\triangle t)^2 A^2)^{-1} (\triangle t)^2 A^2 .
$$
The proof is done by resolvent estimates (Hautus test) and by carefully treating lower and higher frequencies.

\subsubsection{Semilinear case}
Coming back to the general semilinear case, we consider the implicit midpoint time discretization of \eqref{main_eq} given by
\begin{equation}\label{time_dis_nl}
\left\{\begin{split}
& \frac{\widetilde{u}^{k+1} - u^k}{\triangle t} + A\left(\frac{u^k+ \widetilde{u}^{k+1}}{2}\right) + BF \left( \frac{u^k+ \widetilde{u}^{k+1}}{2}\right) = 0,\\
& \frac{  \widetilde{u}^{k+1} - u^{k+1} }{\triangle t} = \mathcal{V}_{\triangle t} u^{k+1} , \\
& u^0=u(0) .
\end{split}\right.
\end{equation}
Written in an expansive way, \eqref{time_dis_nl} gives
\begin{equation*}
\frac{u^{k+1} - u^k}{\triangle t}  + A\left(\frac{u^k+ u^{k+1}  }{2}\right)  + BF \left( \frac{u^k+ (\mathrm{id}_X + \triangle t\, \mathcal{V}_{\triangle t}) u^{k+1} }{2}\right) 
+ \mathcal{V}_{\triangle t} u^{k+1}  + \frac{\triangle t}{2} A \mathcal{V}_{\triangle t} u^{k+1} = 0 .
\end{equation*}
For the energy $E_{u^k} = \frac{1}{2}\Vert u^k\Vert _{X}^2$, we have
\begin{multline*}
E_{u^{k+1}} - E_{u^k} = - \triangle t \left\langle B\left(\frac{u^k+ \widetilde{u}^{k+1}}{2}\right), F\left(\frac{u^k+ \widetilde{u}^{k+1}}{2}\right) \right\rangle_X \\
- \triangle t \, \Vert (\mathcal{V}_{\triangle t})^{1/2} u^{k+1}\Vert_X^2 - \frac{(\triangle t )^2}{2} \Vert \mathcal{V}_{\triangle t} u^{k+1}\Vert_X^2 \leq 0, 
\end{multline*}
for every integer $k$.

\begin{theorem}[\cite{AlabauPrivatTrelat}]\label{thm_time}
In addition to the above assumptions, we assume that there exist $T>0$ and $C_T>0$ such that, setting $N=[T/\triangle t]$ (integer part), we have
\begin{multline*}
C_T \Vert \phi^0\Vert_X^2 \leq \triangle t \sum_{k=0}^{N-1} \left\Vert  B^{1/2} \left( \frac{\phi^k+ \widetilde{\phi}^{k+1}}{2}\right)\right\Vert_X^2 \\
+ \triangle t \, \sum_{k=0}^{N-1} \Vert (\mathcal{V}_{\triangle t})^{1/2}\phi^{k+1}\Vert_X^2+
(\triangle t )^2 \sum_{k=0}^{N-1} \Vert \mathcal{V}_{\triangle t}\phi^{k+1}\Vert_X^2 ,
\end{multline*}
for every solution of
\begin{equation*}
\left\{\begin{split}
& \frac{\widetilde{\phi}^{k+1} - \phi^k}{\triangle t} + A\left(\frac{\phi^k+ \widetilde{\phi}^{k+1}}{2}\right) = 0 , \\
& \frac{\widetilde{\phi}^{k+1}-\phi^{k+1}}{\triangle t} = \mathcal{V}_{\triangle t} \phi^{k+1} ,\end{split}\right.
\end{equation*}
(uniform observability inequality with viscosity for the time semi-discretized linear conservative equation with viscosity).

Then, the solutions of \eqref{time_dis_nl} are well defined on $[0,+\infty)$ and, the energy of any solution satisfies
\begin{equation*}
E_{u^{k}} \lesssim  T \max(\gamma_1, E_{u^0}) \, L\left( \frac{1}{\psi^{-1}( \gamma_2 k \triangle t )} \right),
\end{equation*}
for every integer $k$, 
with $\gamma_2\simeq  C_T / T ( 1 + e^{2 T\Vert B\Vert} \max( 1, T\Vert B\Vert ) ) $
and $\gamma_1 \simeq \Vert B\Vert / \gamma_2$.
Moreover, under \eqref{condlimsup}, we have the simplified decay rate
\begin{equation*}
E_{u^{k}} \lesssim  T \max(\gamma_1, E_{u^0}) \, (H')^{-1}\left( \frac{\gamma_3}{k \triangle t } \right),
\end{equation*}
for every integer $k$, for some positive constant $\gamma_3\simeq 1$.
\end{theorem}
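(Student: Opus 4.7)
The plan is to transport the four-step strategy that yielded Theorem~\ref{thm_continuous} to the time-discrete setting, replacing time integrals on $[0,T]$ by Riemann sums over the grid $t_k=k\triangle t$, $0\leq k\leq N-1$, and replacing pointwise values at time $t$ by the midpoint averages $\tfrac{1}{2}(u^k+\widetilde u^{k+1})$ that naturally appear in the scheme~\eqref{time_dis_nl}. Throughout, the viscosity contributions $\triangle t\,\mathcal V_{\triangle t}u^{k+1}$ and $(\triangle t)^2\mathcal V_{\triangle t}u^{k+1}$ must be carried as extra summands, so that each inequality terminates in the combination that appears in the assumed uniform observability inequality. The argument then closes by applying the nonlinear Gronwall-type lemma of~\cite{AlabauPrivatTrelat} on each window of length $T$.

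\textbf{Step~1 (comparison with the damped linear scheme).} Introduce the linear solution $(z^k,\widetilde z^{k+1})$ obeying the same scheme as~\eqref{time_dis_nl} but with $F=\mathrm{id}_X$ and $z^0=u^0$, and set $\theta^k=u^k-z^k$. Taking the $X$-scalar product of the $\theta$-equation with $\tfrac12(\theta^k+\widetilde\theta^{k+1})$ and using the skew-adjointness of $A$ kills the leading term, exactly as in the continuous Step~1; a discrete Young inequality then yields
\begin{equation*}
\triangle t\sum_{k=0}^{N-1}\Bigl\Vert B^{1/2}\tfrac12(z^k+\widetilde z^{k+1})\Bigr\Vert_X^2\lesssim \triangle t\sum_{k=0}^{N-1}\Bigl(\Vert B^{1/2}\tfrac12(u^k+\widetilde u^{k+1})\Vert_X^2+\Vert B^{1/2}F(\tfrac12(u^k+\widetilde u^{k+1}))\Vert_X^2\Bigr),
\end{equation*}
plus the corresponding viscosity sums. \textbf{Step~2 (comparison with the conservative scheme).} Compare $z^k$ with the solution $\phi^k$ of the conservative scheme with viscosity (the one appearing in the hypothesis) starting from $\phi^0=u^0$: setting $\theta^k=\phi^k-z^k$ and performing the same manipulation, now with a driving term of the form $B\cdot\tfrac12(\phi^k+\widetilde\phi^{k+1})$, one obtains
\begin{equation*}
\triangle t\sum_{k=0}^{N-1}\Vert B^{1/2}\tfrac12(\phi^k+\widetilde\phi^{k+1})\Vert_X^2\leq k_T\,\triangle t\sum_{k=0}^{N-1}\Vert B^{1/2}\tfrac12(z^k+\widetilde z^{k+1})\Vert_X^2,
\end{equation*}
with $k_T\simeq 1+T^2\Vert B^{1/2}\Vert^4$, again up to the viscosity accounting.

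\textbf{Step~3 (optimal-weight nonlinear estimate) and Step~4 (closing).} Following the optimal-weight method of~\cite{alabau_AMO2005}, set $w(s)=L^{-1}(s/\beta)$ with $\beta\simeq\max(\gamma_1,E_{u^0})$. Since the discretization is only in time, the pointwise comparisons assumed on $\rho$ apply verbatim at each grid point $\tfrac12(u^k+\widetilde u^{k+1})$; applying them together with the Young--Fenchel inequality $rs\leq H(s)+H^*(r)$ and telescoping the discrete dissipation identity gives
\begin{equation*}
\triangle t\sum_{k=0}^{N-1}w(E_{u^0})\Bigl(\Vert B^{1/2}\tfrac12(u^k+\widetilde u^{k+1})\Vert_X^2+\Vert B^{1/2}F(\tfrac12(u^k+\widetilde u^{k+1}))\Vert_X^2\Bigr)\lesssim T\Vert B\Vert H^*(w(E_{u^0}))+(w(E_{u^0})+1)(E_{u^0}-E_{u^N}).
\end{equation*}
Chaining Steps 1--3 with the assumed uniform observability inequality applied to $\phi$, and recalling that by definition of $L$ one has $H^*(w(E_{u^0}))=w(E_{u^0})E_{u^0}$, produces the per-window contractive estimate $E_{u^N}\leq E_{u^0}\bigl(1-\rho_T\,L^{-1}(E_{u^0}/\beta)\bigr)$. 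Iterating this on each block of $N$ grid points and feeding the recursion into the nonlinear Gronwall lemma of~\cite{AlabauPrivatTrelat} yields the rate $L\bigl(1/\psi^{-1}(\gamma_2 k\triangle t)\bigr)$, and the simplified rate $(H')^{-1}(\gamma_3/(k\triangle t))$ under~\eqref{condlimsup}. Global well-posedness on $[0,+\infty)$ follows from the fact that each implicit mid-point step reduces to a globally Lipschitz fixed-point problem on $X$ (thanks to the boundedness of $B$ and $\mathcal V_{\triangle t}$), so that the a priori energy decrease prevents blow-up.

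\textbf{Main obstacle.} The delicate point is the bookkeeping of the viscosity sums in Steps~1 and~2. When one takes the scalar product with the midpoint average, the auxiliary relation $\widetilde u^{k+1}=u^{k+1}+\triangle t\,\mathcal V_{\triangle t}u^{k+1}$ generates cross-terms involving both $B\mathcal V_{\triangle t}u^{k+1}$ and $A\mathcal V_{\triangle t}u^{k+1}$ that are not manifestly dissipative; they must be absorbed \emph{precisely} by the two viscosity summands $\triangle t\sum\Vert\mathcal V_{\triangle t}^{1/2}\phi^{k+1}\Vert_X^2$ and $(\triangle t)^2\sum\Vert\mathcal V_{\triangle t}\phi^{k+1}\Vert_X^2$ supplied by the hypothesis. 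Arranging the discrete Young inequalities so that the resulting constants remain uniform in $\triangle t$, and so that the $\triangle t$-scaling of each viscosity contribution matches what the observability inequality provides, is where the proof requires the most care.
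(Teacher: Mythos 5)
The paper does not actually prove Theorem~\ref{thm_time}: it is quoted from \cite{AlabauPrivatTrelat}, and the only guidance the text gives is the four-step sketch written out for the continuous Theorem~\ref{thm_continuous}. Your proposal is the natural transposition of that sketch (comparison with the damped linear scheme, then with the conservative scheme, then the optimal-weight estimate, then iteration over windows of length $T$), so the skeleton is certainly the intended one. As written, however, it is a plan rather than a proof, and the one quantitative claim you do make is inconsistent with the theorem you are proving.

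The inconsistency is in your Step~2: you assert $k_T\simeq 1+T^2\Vert B^{1/2}\Vert^4$, i.e.\ the same constant as in the continuous Step~2, whereas the theorem announces $\gamma_2\simeq C_T/T(1+e^{2T\Vert B\Vert}\max(1,T\Vert B\Vert))$ --- note the exponential factor $e^{2T\Vert B\Vert}$, which is absent from both the continuous and the space semi-discrete versions of the result. That factor cannot come out of the double-sum Fubini/Young manipulation you are mimicking; it is the signature of a discrete Gronwall inequality. The reason the continuous argument does not transpose verbatim is precisely the point you relegate to your ``main obstacle'' paragraph: in the discrete energy identity for $\theta^k$ the intermediate variable $\widetilde\theta^{k+1}=\theta^{k+1}+\triangle t\,\mathcal V_{\triangle t}\theta^{k+1}$ generates cross-terms in $A\mathcal V_{\triangle t}$ and $B\mathcal V_{\triangle t}$ that are not sign-definite and cannot all be absorbed into the two viscosity sums of the hypothesis with $\triangle t$-uniform constants by Young's inequality alone; the comparison has to be closed by iterating a one-step estimate over the $N$ grid points, and that iteration is what produces the exponential in $T\Vert B\Vert$. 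Since this is exactly the step you identify as delicate and then do not carry out, the genuinely new content of the time-discrete theorem is left unproven. A minor additional slip: from $L(r)=\widehat H^*(r)/r$ and $w=L^{-1}(\cdot/\beta)$ one gets $\widehat H^*(w(E_{u^0}))=w(E_{u^0})E_{u^0}/\beta$, not $w(E_{u^0})E_{u^0}$; the missing $\beta$ is precisely what allows the $T\Vert B\Vert H^*(w)$ term to be absorbed by the observability lower bound, so it should not be dropped.
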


\subsection{Full discretizations}
Results for full discretization schemes can be obtained in an automatic way, from the previous time discretization and space discretization results: indeed, following \cite{EZ1}, we note that the results for time semi-discrete approximation schemes are actually valid for a class of abstract systems depending on a parameter, uniformly with respect to this parameter that is typically the space mesh parameter $\triangle x$. Then, using the results obtained for space semi-discretizations, we infer the desired uniform properties for fully discrete schemes. In other words, we first discretize in space and then in time.
We refer to \cite{AlabauPrivatTrelat} for the detailed definition of that abstract class.

\begin{theorem}[\cite{AlabauPrivatTrelat}]\label{thm_full}
Under the assumptions of Theorems \ref{thm_continuous}, \ref{thm_space} and \ref{thm_time}, the solutions of
\begin{equation*}
\left\{\begin{split}
& \frac{\widetilde{u}^{k+1}_{\triangle x} - u^k_{\triangle x}}{\triangle t} + A_{\triangle x}\left(\frac{u^k_{\triangle x}+ \widetilde{u}^{k+1}_{\triangle x}}{2}\right) + B_{\triangle x}F_{\triangle x} \left( \frac{u^k_{\triangle x}+ \widetilde{u}^{k+1}_{\triangle x}}{2}\right) + \mathcal{V}_{\triangle x} \left(\frac{u^k_{\triangle x}+ \widetilde{u}^{k+1}_{\triangle x}}{2}\right) = 0,\\
& \frac{  \widetilde{u}^{k+1}_{\triangle x} - u^{k+1}_{\triangle x} }{\triangle t} = \mathcal{V}_{\triangle t} u^{k+1}_{\triangle x} , 
\end{split}\right.
\end{equation*}
are well defined for every integer $k$, for every initial condition $u^0_{\triangle x}\in X_{\triangle x}$, and for every $\triangle x\in (0,\triangle x_0)$, and the energy of any solution satisfies
\begin{equation*}
\frac{1}{2}\Vert u^k_{\triangle x}\Vert_{\triangle x}^2 = E_{u^{k}_{\triangle x}} \leq  T \max(\gamma_1, E_{u^0_{\triangle x}}) \, L\left( \frac{1}{\psi^{-1}( \gamma_2 k \triangle t )} \right),
\end{equation*}
for every integer $k$, 
with $\gamma_2\simeq  C_T / T ( 1 + e^{2 T\Vert B\Vert} \max( 1, T\Vert B\Vert ) ) $
and $\gamma_1 \simeq \Vert B\Vert / \gamma_2$.
Moreover, under \eqref{condlimsup}, we have the simplified decay rate
\begin{equation*}
E_{u^{k}_{\triangle x}} \leq   T \max(\gamma_1, E_{u^0_{\triangle x}}) \, (H')^{-1}\left( \frac{\gamma_3}{k \triangle t } \right),
\end{equation*}
for every integer $k$, for some positive constant $\gamma_3\simeq 1$.
\end{theorem}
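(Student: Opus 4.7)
The plan is to follow the bootstrap strategy already used in \cite{EZ1} and alluded to in the paragraph preceding the statement: treat the space semi-discrete system as a family of finite-dimensional semilinear ODEs parameterized by $\triangle x$, and apply the time semi-discretization theorem (Theorem \ref{thm_time}) uniformly on that family. More precisely, for each fixed $\triangle x\in(0,\triangle x_0)$, the equation
$$
\dot u_{\triangle x} + A_{\triangle x} u_{\triangle x} + B_{\triangle x} F_{\triangle x}(u_{\triangle x}) + (\triangle x)^\sigma \mathcal{V}_{\triangle x} u_{\triangle x} = 0
$$
plays the role of \eqref{main_eq}: $A_{\triangle x}$ is skew-symmetric, $B_{\triangle x}$ is symmetric nonnegative, the Lipschitz constants of $F_{\triangle x}$ on bounded sets are uniform in $\triangle x$, and by hypothesis the filtered nonlinearity $\tilde\rho_{\triangle x}$ satisfies the same sector condition as $\rho$ with constants $c_1,c_2$ independent of $\triangle x$. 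Thus Theorem \ref{thm_space} gives the continuous-in-time decay for the semi-discrete flow uniformly in $\triangle x$, and what remains is to show that replacing the continuous time by the implicit midpoint rule (with the additional viscosity correction $\mathcal{V}_{\triangle t}$) preserves this decay uniformly in both parameters.

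The first step is to rewrite the fully discrete scheme in the standard two-step form of \eqref{time_dis_nl} with $A$, $B$, $F$ replaced by $A_{\triangle x}$, $B_{\triangle x}$, $F_{\triangle x}$, so that the energy identity
$$
E_{u^{k+1}_{\triangle x}} - E_{u^k_{\triangle x}} = -\triangle t\left\langle B_{\triangle x}\!\left(\tfrac{u^k_{\triangle x}+\widetilde u^{k+1}_{\triangle x}}{2}\right), F_{\triangle x}\!\left(\tfrac{u^k_{\triangle x}+\widetilde u^{k+1}_{\triangle x}}{2}\right)\right\rangle_{\triangle x} - \triangle t\,\|\mathcal{V}_{\triangle t}^{1/2} u^{k+1}_{\triangle x}\|_{\triangle x}^2 - \tfrac{(\triangle t)^2}{2}\|\mathcal{V}_{\triangle t} u^{k+1}_{\triangle x}\|_{\triangle x}^2
$$
holds with nonpositive right-hand side. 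The second step is to verify the assumptions of Theorem \ref{thm_time} for each fixed $\triangle x$ with constants independent of $\triangle x$: the uniform observability inequality with viscosity for the time-discretized conservative equation $\dot\phi_{\triangle x}+A_{\triangle x}\phi_{\triangle x}=0$ follows, by the second item of the Ervedoza--Zuazua theorem quoted above, from the uniform observability inequality for the space semi-discrete conservative equation (hypothesis of Theorem \ref{thm_space}), provided one takes $\mathcal{V}_{\triangle t}=(\triangle t)^2 A_{\triangle x}^* A_{\triangle x}$ or an equivalent admissible choice. The third step is to run the four-step comparison argument behind Theorem \ref{thm_continuous}, adapted to the discrete setting as in the proof of Theorem \ref{thm_time}, and to conclude by the same optimal-weight estimate with $w(s)=L^{-1}(s/\beta)$, obtaining a recursion of the form $E_{u^{k+1}_{\triangle x}} \le E_{u^k_{\triangle x}}\bigl(1-\rho_T L^{-1}(E_{u^k_{\triangle x}}/\beta)\bigr)$ whose inversion via $\psi$ yields the announced decay.

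The main obstacle is purely one of uniformity bookkeeping. Once the scheme is expanded, several cross terms of the form $\tfrac{\triangle t}{2}A_{\triangle x}\mathcal{V}_{\triangle t}u^{k+1}_{\triangle x}$ and $\tfrac{\triangle t}{2}B_{\triangle x}\mathcal{V}_{\triangle t}u^{k+1}_{\triangle x}$ appear, and one must check, as in \cite{EZ1}, that they are either absorbed by the three dissipation terms in the energy identity or controlled by the quantities appearing on the right-hand side of the uniform observability inequality. The delicate part is to see that the constants $C_T$, $\gamma_1$, $\gamma_2$, $\gamma_3$ produced by each of the three layers (continuous optimal-weight comparison, space discretization, time discretization) multiply into constants with the same structural dependence on $T$, $\|B\|$ and the profile $\rho$, and in particular do not pick up any hidden factor depending on $\triangle x$ or $\triangle t$. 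Granted this, the simplified rate under \eqref{condlimsup} is obtained by the same asymptotic analysis of $\psi^{-1}$ as in Theorem \ref{thm_continuous}, and the theorem follows.
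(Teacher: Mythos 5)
Your proposal follows exactly the route the paper takes: view the space semi-discrete systems as an abstract family of semilinear evolution equations parameterized by $\triangle x$ (with all structural constants uniform in $\triangle x$ by Theorem \ref{thm_space}'s hypotheses), and then apply the time semi-discretization result of Theorem \ref{thm_time} uniformly over that family — i.e., first discretize in space, then in time. The additional bookkeeping you describe (energy identity, absorption of the cross terms, tracking of $\gamma_1,\gamma_2,\gamma_3$) matches the intended argument, so this is essentially the paper's own proof.
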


As an example, we consider the nonlinear damped wave equation
$$
\partial_{tt}u(t,x) - \triangle u(t,x) + b(x) \rho(x,\partial_t u(t,x))=0 ,
$$
with appropriate assumptions on $\rho$, as discussed previously.
We first semi-discretize in space with finite differences with the viscosity operator $\mathcal{V}_{\triangle x} = - (\triangle x)^2\triangle_{\triangle x}$, obtaining
$$
u_{\triangle x,\sigma}''(t)-\triangle_{\triangle x} u_{\triangle x,\sigma}(t) + b_{\triangle x}(x_\sigma) \rho(x_\sigma,u_{\triangle x,\sigma}'(t)) - (\triangle x)^2\triangle_{\triangle x} u_{\triangle x}'(t) = 0 ,
$$
where $\sigma$ is an index for the mesh. The solutions of that system have the uniform energy decay rate $L(1/\psi^{-1}(t))$ (up to some constants).

We next discretize in time, obtaining
\begin{equation*}
\left\{\begin{split}
& \frac{\widetilde{u}^{k+1}_{\triangle x,\sigma} - u^k_{\triangle x,\sigma}}{\triangle t} =  \frac{v^k_{\triangle x,\sigma}+\widetilde{v}^{k+1}_{\triangle x,\sigma} }{2}, \\
& \frac{\widetilde{v}^{k+1}_{\triangle x,\sigma} - v^k_{\triangle x,\sigma}}{\triangle t} - \triangle_{\triangle x,\sigma} \frac{u^k_{\triangle x,\sigma}+ \widetilde{u}^{k+1}_{\triangle x,\sigma}}{2} + b_{\triangle x,\sigma}(x_\sigma) \rho \left( x_\sigma, \frac{v^k_{\triangle x,\sigma}+\widetilde{v}^{k+1}_{\triangle x,\sigma} }{2} \right)  \\
& \qquad\qquad\qquad\qquad\qquad\qquad\qquad\qquad\qquad\qquad\qquad
 - (\triangle x)^2 \triangle_{\triangle x,\sigma} \frac{v^k_{\triangle x,\sigma}+\widetilde{v}^{k+1}_{\triangle x,\sigma} }{2} = 0,\\
& \frac{  \widetilde{u}^{k+1}_{\triangle x,\sigma} - u^{k+1}_{\triangle x,\sigma} }{\triangle t} = -(\triangle t)^2 \triangle_{\triangle x,\sigma}^2 u^{k+1}_{\triangle x,\sigma} , \\
& \frac{  \widetilde{v}^{k+1}_{\triangle x,\sigma} - v^{k+1}_{\triangle x,\sigma} }{\triangle t} = -(\triangle t)^2 \triangle_{\triangle x,\sigma}^2 v^{k+1}_{\triangle x,\sigma} ,
\end{split}\right.
\end{equation*}
and according to Theorem \ref{thm_full}, the solutions of that system have the uniform energy decay rate $L(1/\psi^{-1}(t))$ (up to some constants).

\subsection{Conclusion and open problems}
\subsubsection{(Un)-Boundedness of $B$}
We have assumed $B$ bounded:
this involves the case of local or nonlocal internal dampings, but this does not cover, for instance, the case of boundary dampings.

For example, let us consider the linear 1D wave equation with boundary damping
\begin{equation*}
\left\{\begin{array}{ll}
\partial_{tt}u-\partial_{xx} u = 0, & t\in(0,+\infty), \ x\in(0,1), \\
u(t,0)=0,\quad \partial_x u(t,1) = -\alpha \partial_t u(t,1), & t\in(0,+\infty),
\end{array}\right.
\end{equation*}
with $\alpha>0$. The energy
$$
E(t)=\frac{1}{2}\int_0^1 \left( \vert\partial_t u(t,x)\vert^2 + \vert\partial_x u(t,x)\vert^2 \right) dx
$$
decays exponentially.
It is proved in \cite{tebou_zuazua2} that the solutions of
\begin{equation*}
\left\{\begin{split}
& u''_{\triangle x} - \triangle_{\triangle x} u_{\triangle x} - (\triangle x)^2 \triangle_{\triangle x} u'_{\triangle x} = 0, \\
& u_{\triangle x}(t,0)=0,\quad D_{\triangle x} u_{\triangle x}(t,1) = -\alpha \partial_t u(t,1), 
\end{split}\right.
\end{equation*}
(regular finite-difference space semi-discrete model with viscosity) with
$$
D_{\triangle x} = \frac{1}{\triangle x}\begin{pmatrix}
-1 & 1 & \hdots & 0\\
0 & \ddots & \ddots  & \vdots\\
\vdots & \ddots & \ddots & 1\\
0 & \hdots & 0 & -1
\end{pmatrix}
$$
(forward finite-difference operator), have a \emph{uniform exponential decay}. But this fails without the viscosity term.

This example is not covered by our results.

We also mention \cite{BanksItoWang_1991} where a sufficient condition (based on energy considerations and not on viscosities) ensuring uniformity is provided and is applied to 1D mixed finite elements and to polynomial Galerkin approximations in hypercubes. The extension to the semilinear setting is open.

\subsubsection{More general nonlinear models}
If the nonlinearity $F$ involves an unbounded operator, like for instance
$$
u'(t)+Au(t)+BF(u(t),\nabla u(t)) = 0,
$$
then the situation is widely open.

For instance, one can think of investigating semilinear wave equations with strong damping
$$
\partial_{tt}u - \triangle u - a \triangle \partial_t u + b \partial_t u + g\star\triangle u + f(u) =0 ,
$$
with $f$ not too much superlinear. There are many possible variants, with boundary damping, with delay, with nonlocal terms (convolution), etc.
Many results do exist in the continuous setting, but the investigation of whether the uniform decay is preserved under discretization or not remains to be done.

\subsubsection{Energy with nonlinear terms}
Another case which is not covered by our results is the case of semilinear wave equations with locally distributed damping
$$
\partial_{tt}u - \triangle u + a(x) \partial_t u + f(u) =0 ,
$$
with Dirichlet boundary conditions, $a$ a nonnegative bounded function which is positive on an open subset $\omega$ of $\Omega$, and $f$ a function of class $C^1$ such that $f(0)=0$, $sf(s)\geq 0$ for every $s\in\R$ (defocusing case), $\vert f'(s)\vert\leq C\vert s\vert^{p-1}$ with $p\leq n/(n-2)$ (energy subcritical).
It is proved in \cite{Zuazua_CPDE1990} (see also \cite{DehmanLebeauZuazua,JolyLaurent_APDE2013}) that, under geometric conditions on $\omega$, the energy involving a nonlinear term
$$
\int_\Omega \left(  (\partial_t u)^2 + \Vert\nabla u\Vert^2 + F(u) \right) dx ,
$$
with $F(s)=\int_0^s f$,
decays exponentially in time along any solution.
The investigation of the uniform decay for approximation schemes seems to be open.
In particular, it would be interesting to know whether microlocal arguments withstand the discretization procedure.
In particular we raise the following informal question:
\begin{quote}
Do microlocalization and discretization commute?
\end{quote}

\subsubsection{Uniform polynomial energy decay without observability}
Let us assume that $F=\mathrm{id}_X$ in \eqref{main_eq} (linear case).
As proved in Theorem \ref{thm_Haraux}, observability for the conservative linear equation is equivalent to the exponential decay for the linear damped equation.

If observability fails for the conservative equation, then we do not have an exponential decay, however we can still hope to have, for instance, a polynomial decay.
This is the case for some weakly damped wave equations when GCC fails (see \cite{BLR}).

In such cases, it is interesting to investigate the question of proving a uniform polynomial decay rate for space and/or time semi-discrete approximations.
In \cite{ANVE}, such results are stated for second-order linear equations, with appropriate viscosity terms, and under adequate spectral gap conditions.
The extension to a more general framework (weaker assumptions, full discretizations), and to semilinear equations, seems to be open.

\end{document}